\documentclass[12pt, reqno]{amsart}
\usepackage{amssymb, mathrsfs}
\usepackage{latexsym}
\usepackage{amsmath}
\usepackage{amsthm}
\usepackage{amsfonts}
\usepackage{dsfont, upgreek}
\usepackage{mathtools}
\usepackage{epsfig}
\usepackage[nocompress]{cite}
\usepackage{amscd}
\usepackage{graphicx}
\usepackage{tikz-cd}
\usepackage{enumitem}
\setlist[enumerate]{itemsep=0.5ex}

\usepackage{color}
\usepackage{tikz}
\usetikzlibrary{patterns}

\usepackage{geometry}
\usepackage{tikz}
\usetikzlibrary{decorations.markings}
\usetikzlibrary{arrows.meta}

\usepackage{extarrows}

\usepackage{adjustbox}
\usepackage{pgfplots}
\usepgfplotslibrary{colormaps}
\usepackage{slashed}

\usepackage[colorlinks=true,linkcolor=blue,citecolor=blue,urlcolor=blue]{hyperref}
\providecommand{\MR}[1]{}
\renewcommand{\MR}[1]{\href{https://mathscinet.ams.org/mathscinet-getitem?mr=#1}{MR #1}}

\usepackage[colorinlistoftodos,prependcaption,textsize=tiny]{todonotes}  

\usepackage[all,cmtip]{xy}
\theoremstyle{plain}
\newtheorem{theorem}{Theorem}[section]
\newtheorem{proposition}[theorem]{Proposition}
\newtheorem{lemma}[theorem]{Lemma}
\newtheorem{corollary}[theorem]{Corollary}

\theoremstyle{definition}

\newtheorem*{claim*}{Claim}

\theoremstyle{remark}

\numberwithin{equation}{section}

\newcommand{\Sc}{\mathrm{Sc}}

\newcommand{\divv}{\mathrm{div}}

\newcommand{\Ric}{\mathrm{Ric}}

\newcommand{\interior}[1]{%
	{\kern0pt#1}^{\mathrm{\,o}}%
}

\makeatletter
\let\save@mathaccent\mathaccent
\newcommand*\if@single[3]{%
	\setbox0\hbox{${\mathaccent"0362{#1}}^H$}%
	\setbox2\hbox{${\mathaccent"0362{\kern0pt#1}}^H$}%
	\ifdim\ht0=\ht2 #3\else #2\fi
}
\newcommand*\rel@kern[1]{\kern#1\dimexpr\macc@kerna}
\newcommand*\overbar[1]{\@ifnextchar^{{\wide@bar{#1}{0}}}{\wide@bar{#1}{1}}}
\newcommand*\wide@bar[2]{\if@single{#1}{\wide@bar@{#1}{#2}{1}}{\wide@bar@{#1}{#2}{2}}}
\newcommand*\wide@bar@[3]{%
	\begingroup
	\def\mathaccent##1##2{%
		\let\mathaccent\save@mathaccent
		\if#32 \let\macc@nucleus\first@char \fi
		\setbox\z@\hbox{$\macc@style{\macc@nucleus}_{}$}%
		\setbox\tw@\hbox{$\macc@style{\macc@nucleus}{}_{}$}%
		\dimen@\wd\tw@
		\advance\dimen@-\wd\z@
		\divide\dimen@ 3
		\@tempdima\wd\tw@
		\advance\@tempdima-\scriptspace
		\divide\@tempdima 10
		\advance\dimen@-\@tempdima
		\ifdim\dimen@>\z@ \dimen@0pt\fi
		\rel@kern{0.6}\kern-\dimen@
		\if#31
		\overline{\rel@kern{-0.6}\kern\dimen@\macc@nucleus\rel@kern{0.4}\kern\dimen@}%
		\advance\dimen@0.4\dimexpr\macc@kerna
		\let\final@kern#2%
		\ifdim\dimen@<\z@ \let\final@kern1\fi
		\if\final@kern1 \kern-\dimen@\fi
		\else
		\overline{\rel@kern{-0.6}\kern\dimen@#1}%
		\fi
	}%
	\macc@depth\@ne
	\let\math@bgroup\@empty \let\math@egroup\macc@set@skewchar
	\mathsurround\z@ \frozen@everymath{\mathgroup\macc@group\relax}%
	\macc@set@skewchar\relax
	\let\mathaccentV\macc@nested@a
	\if#31
	\macc@nested@a\relax111{#1}%
	\else
	\def\gobble@till@marker##1\endmarker{}%
	\futurelet\first@char\gobble@till@marker#1\endmarker
	\ifcat\noexpand\first@char A\else
	\def\first@char{}%
	\fi
	\macc@nested@a\relax111{\first@char}%
	\fi
	\endgroup
}
\makeatother
\begin{document}

\title{Scalar Curvature, Sharp Bottom Spectrum and Geometric Rigidity}

\author{Jinmin Wang}
\address[Jinmin Wang]{Institute of Mathematics, Chinese Academy of Sciences}
\email{jinmin@amss.ac.cn}
\thanks{The first author is partially supported by NSFC 12501169.}
\author{Bo Zhu}
\address[Bo Zhu]{Yau Mathematical Sciences Center, Tsinghua University}
\email{zhub@tsinghua.edu.cn}
\thanks{The second author is  partially supported by NSFC 12501066.}

\begin{abstract}
We prove rigidity in the equality case of the sharp bottom spectrum estimate under scalar curvature lower bound. Under the same topological assumptions as in our previous work, a closed manifold $(M,g)$ with $\Sc_g\geq -n(n-1)$ and
$\lambda_1(\widetilde M,\widetilde g)=(n-1)^2/4$  must be hyperbolic. This gives rigidity results for closed hyperbolic manifolds and for closed manifolds admitting a metric of nonpositive sectional curvature.
\end{abstract}
\maketitle

\section{Introduction}

Let $(X^n,g)$ be a connected, complete, noncompact Riemannian manifold, and
let $\Delta$ denote its Laplace--Beltrami operator. The $L^2$ bottom of the
spectrum of $\Delta$ is given by the variational characterization
(see \cite[Chapter~6, Definition~6.3]{Li_geometric_analysis})
\begin{equation*}
    \lambda_1(X,g):=\inf\left\{\frac{\int_X|\nabla f|^2}{\int_X f^2}: f\in C_c^\infty(X),\ f\not\equiv 0\right\}.
\end{equation*}

This quantity is a fundamental spectral invariant of the large-scale
geometry of $(X,g)$. A classical comparison theorem of S.Y. Cheng asserts
that a Ricci curvature lower bound gives a sharp upper bound for
$\lambda_1$ and proves if $\operatorname{Ric}_g\geq -(n-1)g$, then
\begin{equation}\label{eq:cheng-spectrum}
    \lambda_1(X,g)\leq \frac{(n-1)^2}{4}.
\end{equation}
See \cite[Theorem~4.2]{Cheng_eigenvalue}. The value
$(n-1)^2/4$ is attained by hyperbolic space $\mathbb H^n$, and hence
\eqref{eq:cheng-spectrum} is sharp. The equality and near-equality cases
have been extensively studied in connection with rigidity phenomena.
In particular, Li--Wang developed harmonic function methods to obtain
splitting and rigidity results for complete noncompact manifolds satisfying
$\operatorname{Ric}_g\geq -(n-1)g$
(see \cite{Li_Wang_positive_spectrum_1,Li_Wang_positive_spectrum_2}).

The scalar curvature analogue of Cheng's estimate is subtler, since scalar
curvature does not directly control the Laplacian by comparison geometry. In
dimension three, Munteanu and J. Wang proved the sharp bottom
spectrum estimate under a negative scalar curvature lower bound, using
harmonic function techniques (see \cite[Theorem~1.1]{MunteanuWang}). In our previous
work, we established a higher-dimensional version for closed manifolds whose
universal cover is spin, using Dirac operators and higher index theory
(see \cite[Theorem~1.2]{WangZhu24}). We recall the result in the form
needed below.

\begin{theorem}[Wang--Zhu, see {\cite[Theorem~1.2]{WangZhu24}}]\label{thm: cocompact_rigidity}
Let $(M^n,g)$ be a closed Riemannian manifold with fundamental group
$\Gamma$, and let $(\widetilde M,\widetilde g)$ be its Riemannian universal
cover. Suppose that $\Sc_g\geq -\kappa$ for some constant $\kappa\geq0$.
If
\begin{itemize}
\item $M$ is rationally essential, namely the fundamental class $[M]$ is
non-zero in $H_*(B\Gamma,\mathbb Q)$,
\item $\widetilde M$ is spin, and $\Gamma$ satisfies the Strong Novikov
Conjecture.
\end{itemize}
Then
\[
    \lambda_1(\widetilde{M}, \widetilde{g}) \leq \frac{n-1}{4n} \kappa.
\]
Moreover, if equality holds in this estimate, then $\Sc_g\equiv -\kappa$.
\end{theorem}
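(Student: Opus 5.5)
The estimate will come from a Dirac-operator argument by contradiction: if the bottom spectrum were too large, we would build a twisted Dirac operator on $\widetilde M$ that is invertible modulo controlled errors, contradicting the nonvanishing of the higher index.

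First we record the index input. Since $M$ is rationally essential, $\widetilde M$ is spin, and $\Gamma$ satisfies the Strong Novikov Conjecture, the equivariant Dirac operator $\widetilde D$ on $\widetilde M$ has nonzero higher index in $K_*(C^*_r\Gamma)\otimes\mathbb Q$. The standard mechanism is that the image of $[M]$ in $H_*(B\Gamma;\mathbb Q)$ pairs nontrivially with the Chern character of the $K$-homology class of $D$. This is the consequence of rational essentialness that we will use. Hence any $\Gamma$-equivariant perturbation of $\widetilde D$ that is invertible, or at least Fredholm with trivial index class over $C^*_r\Gamma$, is ruled out.

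Next we set up the analytic input. Assume for contradiction that $\lambda:=\lambda_1(\widetilde M,\widetilde g)>\frac{n-1}{4n}\kappa$. By the cocompact action, $\lambda_1$ is approximated by a positive $\Gamma$-invariant-type solution: there is a positive function $u$ on $\widetilde M$ with $\Delta u=-\lambda u$. This follows from the Allegretto--Piepenbrink/Sullivan theorem, and by Harnack and cocompactness we get $|\nabla \log u|\le C$. We then consider the perturbed operator
\[
D_t=\widetilde D+ t\, c(\nabla \log u)\quad\text{or}\quad D_f=\widetilde D+ f,
\]
built from $\phi=\log u$. In the Lichnerowicz formula, $D^2=\nabla^*\nabla+\Sc/4$, we combine the refined Kato/Penrose-type inequality $|\nabla\psi|^2\ge \frac{n}{n-1}|\nabla|\psi||^2$ for harmonic spinors (valid modulo the twisting) with the spectral bound $\int|\nabla h|^2\ge\lambda\int h^2$ applied to $h=|\psi|$. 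This gives, for $\psi$ in the kernel or near-kernel,
\[
0\ge \frac{n}{n-1}\lambda\int|\psi|^2-\frac{\kappa}{4}\int|\psi|^2,
\]
which is positive when $\lambda>\frac{(n-1)\kappa}{4n}$. Thus $\widetilde D^2\ge c>0$ in $L^2$, so $\widetilde D$ is invertible and its higher index vanishes, which is a contradiction. The delicate point is that $\lambda_1$ controls only compactly supported test functions, while the Kato inequality holds only on the kernel. I would instead prove the spectral gap directly: for $\psi$ smooth compactly supported, write $D\psi=\eta$, use the Kato inequality with an error term $|\nabla|\psi||^2\le\frac{n-1}{n}|\nabla\psi|^2+C|\psi||D\psi|$-type, and absorb the errors. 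This step is the main obstacle, namely getting a sharp Kato-type inequality with remainder controlled by $|D\psi|$ so that a uniform lower bound $\|D\psi\|\ge c\|\psi\|$ results.

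Finally we treat the equality case. If $\lambda=\frac{n-1}{4n}\kappa$ but $\Sc_g\not\equiv-\kappa$, we deform the metric. We conformally change $g$ on $M$ by solving a conformal Laplacian problem to obtain $\Sc_{g'}\ge-\kappa'$ with $\kappa'<\kappa$ strictly. If instead $\Sc_g\ge -\kappa$ with strict inequality somewhere, then the first eigenfunction of the conformal Laplacian $-\frac{4(n-1)}{n-2}\Delta+\Sc$ has eigenvalue $>-\kappa$ after normalization, and we take $g'$ with constant scalar curvature greater than $-\kappa$ in the normalized sense. Since $g'=w^{4/(n-2)}g$ with $w$ close to a constant, $\lambda_1$ changes continuously. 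We then need a quantitative comparison showing $\lambda_1(\widetilde g')$ is not below $\frac{n-1}{4n}\kappa'$, which would contradict the strict estimate applied to $g'$. More robustly, I would rerun the integral inequality above with $\Sc$ kept pointwise: equality forces $\int(\Sc+\kappa)|\psi|^2\to0$ along approximate kernel spinors, and the Dirac index gives approximate harmonic spinors with mass spread over a fundamental domain (by equivariance), forcing $\Sc\equiv-\kappa$.
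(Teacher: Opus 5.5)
Your argument for the inequality is essentially the one the paper cites from Wang--Zhu and recalls in Section~2. Non-vanishing of the higher index gives $0\in\operatorname{spec}(\widetilde D)$ (Proposition~\ref{prop:zeroInSpec}). The Lichnerowicz formula, a refined Kato inequality with remainder, and $\int|\nabla|\psi||^2\geq\lambda_1\int|\psi|^2$ then force $\frac{n}{n-1}\lambda_1\leq\frac{\kappa}{4}$. The detour through $u$ and $D_t$ is not needed for this part.

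One correction: the remainder $C|\psi||D\psi|$ you wrote has the wrong scaling and fails pointwise. Where $\nabla\psi=a\otimes\psi$ for a real covector $a$, one has $|\nabla|\psi||^2-\frac{n-1}{n}|\nabla\psi|^2=\frac1n|a|^2|\psi|^2$, while $|\psi||D\psi|=|a||\psi|^2$. The right form is that of Proposition~\ref{prop:refinedKato}, $\frac{n}{n-1}|\nabla|\psi||^2\leq|\nabla\psi|^2+c_n|D\psi|^2+c_n|D\psi||\nabla\psi|$. It is not the main obstacle. Split $\nabla_{e_i}\psi=\mathcal P_i\psi-\frac1n c(e_i)D\psi$, apply the algebraic refined Kato inequality to the Penrose part (which is annihilated by Clifford contraction), and bound the cross terms by Cauchy--Schwarz. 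Together with $\|\nabla\psi\|^2\leq\|D\psi\|^2+C\|\psi\|^2$, your absorption argument then goes through.

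The genuine gap is in the equality case. The conformal route does not close. You would need $\lambda_1(\widetilde g')>\frac{n-1}{4n}\kappa'$, but nothing controls how the conformal factor changes $\lambda_1$ well enough to beat the gain $\kappa-\kappa'$, and you leave this comparison open yourself.

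Your second route is the right one, and it is the route behind the cited equality statement, but its decisive step is asserted rather than proved. Suppose $\Sc_g+\kappa\geq\delta>0$ on a ball $B_r(p)$. You must show that spinors $\sigma$ in the spectral subspace of $[-\varepsilon,\varepsilon]$ keep a fixed fraction of their mass on $\pi^{-1}(B_r(p))$, where $\pi\colon\widetilde M\to M$ is the covering map. This does not follow ``by equivariance''. The spectral subspace is $\Gamma$-invariant, but its elements are $L^2$ sections, not $\Gamma$-invariant ones, and a priori their mass may escape into $\widetilde M\setminus\pi^{-1}(B_r(p))$ as $\varepsilon\to0$.

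The paper closes this with the quantitative unique continuation estimate of Theorem~\ref{thm:uniqueContinuation}: $\|\sigma\|^2\leq c_1e^{c_2\varepsilon^2}\sum_{\gamma\in\Gamma}\int_{B_r(\gamma q)}|\sigma|^2$, with $q$ a lift of $p$. This gives $\int(\Sc+\kappa)|\sigma|^2\geq\delta c_1^{-1}e^{-c_2\varepsilon^2}\|\sigma\|^2$, which contradicts $\int(\Sc+\kappa)|\sigma|^2=O(\varepsilon)\|\sigma\|^2$.

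A function-level substitute also appears to work, and it would put your $u$ to use. In the equality case, $h=|\sigma|$ also satisfies $\int(|\nabla h|^2-\lambda_1h^2)=O(\varepsilon)\|h\|^2$. Combine three facts:
\begin{itemize}
\item the ground-state identity $\int(|\nabla h|^2-\lambda_1h^2)=\int u^2|\nabla(h/u)|^2$;
\item Harnack for $u$ on the translates $\gamma\Omega$ of a precompact connected domain $\Omega$ with $\Gamma\Omega=\widetilde M$, with constants uniform in $\gamma$ since $\gamma$ acts isometrically;
\item the Poincar\'e inequality on $\Omega$.
\end{itemize}
Together they give $\|h\|^2\leq C\int_{\pi^{-1}(B_r(p))}h^2+C\int(|\nabla h|^2-\lambda_1h^2)$. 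Without some such non-concentration estimate, the equality statement is not established.
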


The case $\kappa=0$ gives $\lambda_1(\widetilde M,\widetilde g)=0$, and the
equality case in our previous work implies that $(M,g)$ is Ricci-flat and indeed flat.
See Theorem~1.2 and the subsequent remark in \cite{WangZhu24}. The remaining
case is therefore $\kappa>0$. Since the estimate is invariant under scaling
after the corresponding normalization of the scalar curvature lower bound,
we shall henceforth use the hyperbolic normalization
$\kappa=n(n-1)$.

\medskip

We now turn to the rigidity problem for the equality case of the sharp
spectral bound. Under the Ricci curvature lower bound
$\Ric_g\geq -(n-1)g$, X. Wang gave a rigidity characterization of
hyperbolic space from the sharp bottom spectrum via a sharp estimate for the
Kaimanovich entropy (see \cite[Theorem~1.4]{Wang_eigenvalue}). In the scalar
curvature setting, Munteanu and J. Wang proved the corresponding
three-dimensional rigidity theorem by harmonic function methods
(see \cite[Theorem~1.3]{MunteanuWang}).

The scalar curvature setting is substantially more delicate than the Ricci
curvature setting. A lower bound for scalar curvature gives no direct
Laplacian comparison, entropy comparison, or control of the Ricci curvature.
The sharp estimate above yields scalar curvature rigidity in the equality
case, namely
\(
\Sc_g \equiv -n(n-1).
\)

\medskip

In this paper, we upgrade this scalar curvature rigidity
to full geometric rigidity, by using the harmonic spinor to recover the Ricci
information that is otherwise absent in the equality case.

\begin{theorem}\label{thm:rigidity}
Let $(M^n,g)$ be a closed Riemannian manifold with fundamental group
$\Gamma$, and let $(\widetilde M,\widetilde g)$ be its Riemannian universal
cover. Suppose that $\Sc_g\geq -n(n-1)$. If
\begin{itemize}
\item $M$ is rationally essential,
\item $\widetilde M$ is spin, and $\Gamma$ satisfies the Strong Novikov
Conjecture,
\item the bottom spectrum of the universal cover is sharp, namely
    \[
    \lambda_1(\widetilde M,\widetilde g)=\frac{(n-1)^2}{4}.
    \]
\end{itemize}
Then $(M,g)$ is hyperbolic.
\end{theorem}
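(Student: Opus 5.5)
The approach is to revisit the equality case in the proof of Theorem~\ref{thm: cocompact_rigidity} and extract more than the scalar curvature rigidity $\Sc_g\equiv -n(n-1)$. With $\kappa=n(n-1)$, the bound reads $\frac{n-1}{4n}\kappa=\frac{(n-1)^2}{4}$, so the hypothesis is exactly the equality case. The earlier argument compares the twisted spinorial Laplacian with the scalar Laplacian, using a Dirac operator on $\widetilde M$ twisted by a suitable $\Gamma$-equivariant bundle. Roughly, one combines the Lichnerowicz formula $D^2=\nabla^*\nabla+\Sc/4$, the refined Kato inequality $|\nabla|\sigma||^2\le\frac{n-1}{n}|\nabla\sigma|^2$ for harmonic spinors, and the test functions $|\sigma|^{\alpha}$. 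Nonvanishing of the higher index, which follows from rational essentialness and the Strong Novikov Conjecture, rules out invertibility. In the equality case every inequality in this chain must become an equality, at least in an approximate or limiting sense.

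\textbf{Step 1: produce a genuine $L^2_{\mathrm{loc}}$ limiting object.} Since the index obstruction only gives approximate harmonic spinors, I would first construct, for instance as a weak limit of normalized almost-kernel elements or by passing to an ultralimit or a Roe-algebra trace, a nonzero spinor $\sigma$ on $\widetilde M$ (possibly twisted) satisfying two conditions: $D\sigma=0$, and the Kato inequality is attained. Equality in the refined Kato inequality means $\sigma$ is a generalized \emph{twistor-type} spinor. Combined with $D\sigma=0$ and the sharpness of the Rayleigh quotient for $|\sigma|^{(n-2)/(n-1)}$, I expect this to force $\nabla_X\sigma=\mp\frac12 X\cdot\sigma$ up to the twisting, i.e.\ an imaginary Killing spinor with Killing number $\pm\frac{i}{2}$. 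Verifying that all the slack terms vanish, with the correct constants, is the computational core.

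\textbf{Step 2: Ricci information from the Killing spinor.} Differentiating $\nabla_X\sigma=\mu X\cdot\sigma$ with $\mu^2=-1/4$ yields the standard identity $\Ric(X)\cdot\sigma=-4(n-1)\mu^2X\cdot\sigma$, giving $\Ric=-(n-1)g$ on the open set where $\sigma\ne0$. By unique continuation, or by the fact that a Killing spinor has nowhere vanishing length, this set is all of $\widetilde M$. Hence $\widetilde g$ is Einstein with constant $-(n-1)$.

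\textbf{Step 3: from Einstein to hyperbolic.} I would then invoke X.~Wang's rigidity theorem \cite[Theorem~1.4]{Wang_eigenvalue}. Its hypotheses are that $\Ric\ge-(n-1)$, which now holds, and that $\lambda_1=(n-1)^2/4$ on a cover of a compact manifold; its conclusion is that $\widetilde M$ is isometric to $\mathbb H^n$. Alternatively, one can argue directly: Baum's classification of complete manifolds carrying nontrivial imaginary Killing spinors gives a warped product $\mathbb R\times_{e^{2t}}N$, and cocompactness then forces $N$ flat and the metric hyperbolic. Either way $(M,g)$ is hyperbolic.

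The main obstacle is Step 1. The higher index yields only almost-harmonic spinors with noncompact support behaviour, and a nonzero limit with exact equality in Kato and Lichnerowicz must be extracted without losing mass at infinity. The twisting bundle's curvature contributions must also be shown to vanish in the limit. I would try first to use $\Gamma$-invariance and cocompactness, translating the concentration region back to a fundamental domain and using elliptic estimates for local convergence. The goal is to show that the defect in each inequality tends to zero in $L^1_{\mathrm{loc}}$, so that the limit spinor satisfies the pointwise equalities.
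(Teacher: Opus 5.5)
Your Step 1 aims at the wrong object. A harmonic spinor can never be a Killing spinor with nonzero Killing number. If $\nabla_X\sigma=\mu\,X\cdot\sigma$, then
\[
D\sigma=\sum_{i=1}^n e_i\cdot\nabla_{e_i}\sigma=\mu\sum_{i=1}^n e_i\cdot e_i\cdot\sigma=-n\mu\,\sigma ,
\]
so $D\sigma=0$ forces $\mu=0$ or $\sigma=0$. The case $\mu=0$, a parallel spinor, would make $g$ Ricci-flat, contradicting $\Sc_g\equiv-n(n-1)$. A twisting bundle does not change this computation, so ``up to the twisting'' does not rescue it. Kato equality is also not a twistor-type condition, since a harmonic twistor spinor is parallel. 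What equality in the refined Kato inequality gives for a harmonic spinor is the extremal first-order equation
\[
\nabla_X\sigma=\frac{1}{n-1}\,X\cdot V\cdot\sigma+\frac{n}{n-1}\langle V,X\rangle\sigma,\qquad V=\nabla\log|\sigma| .
\]
This involves an unknown vector field $V$ and carries no Ricci information by itself. On $\mathbb H^n$ the model example is the conformal image $y^{(n-1)/2}\psi_0$ of a parallel spinor of $\mathbb R^n$ in the half-space model, with $|\sigma|=y^{(n-1)/2}$; it is not an imaginary Killing spinor. So your Step~2 has nothing to differentiate, and the Einstein condition does not follow from your plan.

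Getting from the extremal equation to $\Ric=-(n-1)g$ needs two ingredients you do not supply.

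\emph{Local eigenfunction equation.} The limit spinor must also satisfy $-\Delta|\sigma|=\frac{(n-1)^2}{4}|\sigma|$ on a ball. The sharp bottom spectrum is a global Rayleigh-quotient statement, with test function $|\sigma|$ itself rather than $|\sigma|^{(n-2)/(n-1)}$, and it does not pass to local limits by itself. The paper localizes it through Cauchy--Schwarz for the nonnegative form $Q(\varphi_1,\varphi_2)=\int\bigl(\langle\nabla\varphi_1,\nabla\varphi_2\rangle-\frac{(n-1)^2}{4}\varphi_1\varphi_2\bigr)$. It then combines quantitative unique continuation (Theorem~\ref{thm:uniqueContinuation}) with a pigeonhole argument over deck translates. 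This finds a single translate on which all three defects are small relative to its own local mass. Your ``translate back to a fundamental domain'' does not do this, because the defects are only small in global sum.

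\emph{Curvature computation.} One applies $[D,\nabla_X]=\frac12 c(\Ric(X))$ to the extremal equation, which expresses $\Ric$ through $\nabla V$, $\divv V$ and $|V|^2$. Tracing against $\Sc_g\equiv-n(n-1)$ and comparing with $\divv V=\Delta|\sigma|/|\sigma|-|V|^2$ forces $|V|^2=(n-1)^2/4$. This gives $E(V)=0$ for the traceless Ricci tensor $E$, and the Bochner formula for $\log|\sigma|$ then yields $|E|^2=0$.

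Only after this does your Step~3 (X.~Wang's theorem) apply, which is how the paper concludes.
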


As an immediate corollary, we prove the following rigidity theorem for
metrics on a closed hyperbolic manifold.

\begin{corollary}[Hyperbolic case]\label{cor:hyperbolic-rigidity}
Let $M^n$ be a closed hyperbolic manifold, and let $g$ be a Riemannian
metric on $M$ with $\Sc_g\geq -n(n-1)$. If
\[
    \lambda_1(\widetilde M,\widetilde g)=\frac{(n-1)^2}{4},
\]
then $(M,g)$ is hyperbolic.
\end{corollary}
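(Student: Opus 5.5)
The corollary should follow by checking that a closed hyperbolic manifold $M^n$ meets the topological hypotheses of Theorem~\ref{thm:rigidity}, and then applying that theorem to the given metric $g$. These hypotheses depend only on the smooth manifold $M$ and on $\Gamma=\pi_1(M)$, not on $g$. So we may verify them using the hyperbolic structure of $M$, even though $g$ itself is an arbitrary metric with $\Sc_g\geq -n(n-1)$.

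\textbf{Step 1 (spin).} The universal cover $\widetilde M$ is diffeomorphic to $\mathbb H^n\cong\mathbb R^n$ by Cartan--Hadamard. It is therefore contractible, and in particular spin.

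\textbf{Step 2 (rational essentialness).} Since $\widetilde M$ is contractible, $M$ is aspherical, so $M$ is a model for $B\Gamma$. The classifying map $M\to B\Gamma$ is then a homotopy equivalence. Hence $[M]\neq 0$ in $H_n(B\Gamma;\mathbb Q)\cong H_n(M;\mathbb Q)\cong\mathbb Q$, after passing to the orientation double cover if $M$ is nonorientable.

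The nonorientable case needs a little care. If $M$ is nonorientable, we replace $M$ by its orientation double cover $\widehat M$. This cover is again closed hyperbolic, its universal cover is the same $(\widetilde M,\widetilde g)$, and its pulled-back metric still satisfies $\Sc\geq -n(n-1)$. Hyperbolicity of $(\widehat M,\widehat g)$ means constant sectional curvature $-1$, which is a local condition, so it descends to $(M,g)$. The same remark applies if the essentialness statement in Theorem~\ref{thm:rigidity} is to be read for oriented $M$.

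\textbf{Step 3 (Strong Novikov).} $\Gamma$ is a cocompact lattice in $\mathrm{Isom}(\mathbb H^n)$, so it is a word-hyperbolic group (Gromov hyperbolic). The Strong Novikov Conjecture, i.e.\ rational injectivity of the assembly map, is known for hyperbolic groups by Connes--Moscovici and Kasparov--Skandalis. It is also known for discrete subgroups of Lie groups by Kasparov, which covers this case directly.

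\textbf{Step 4 (conclusion).} The spectral hypothesis $\lambda_1(\widetilde M,\widetilde g)=(n-1)^2/4$ is assumed in the corollary. All hypotheses of Theorem~\ref{thm:rigidity} now hold, so $(M,g)$ is hyperbolic.

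The only step requiring a citation rather than a formal check is Step~3. I expect no genuine obstacle beyond the orientability bookkeeping in Step~2.
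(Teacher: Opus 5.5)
Your proposal is correct and follows the paper's argument, which calls the corollary immediate: check that a closed hyperbolic manifold satisfies the topological hypotheses of Theorem~\ref{thm:rigidity}, then apply that theorem. The hypotheses are a contractible (hence spin) universal cover, rational essentialness from asphericity, and the Strong Novikov Conjecture for cocompact lattices in $\mathrm{Isom}(\mathbb H^n)$. Your passage to the orientation double cover in the nonorientable case is a detail the paper leaves unstated, since the rational fundamental class requires orientability.
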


This corollary should be viewed alongside two classical large-scale
invariants of the universal cover, the Cheeger constant
$h(\widetilde M,\widetilde g)$ and the volume entropy
$\operatorname{Ent}(\widetilde M,\widetilde g)$. They are related to the
bottom spectrum by the standard estimates
\[
    \frac{h(\widetilde M,\widetilde g)^2}{4}
    \leq \lambda_1(\widetilde M,\widetilde g)
    \leq \frac{\operatorname{Ent}(\widetilde M,\widetilde g)^2}{4};
\]
in the hyperbolic case, $h=\operatorname{Ent}=n-1$ and
$\lambda_1=(n-1)^2/4$. Under the stronger
Ricci lower bound $\Ric_g\geq -(n-1)g$, the usual comparison theorems give
$\operatorname{Ent}(\widetilde M,\widetilde g)\leq n-1$ and
$\lambda_1(\widetilde M,\widetilde g)\leq (n-1)^2/4$, and the equality case
for volume entropy was proved by Ledrappier--X. Wang using an integral
formula for volume entropy (see \cite{Fran_Wang_entropy_rigidity}); Liu
later gave a short proof of this rigidity result (see
\cite{Liu_volume_entropy}). However,
Kazaras--Song--Xu showed that the scalar curvature lower bound alone does not
force the sharp entropy bound and every closed hyperbolic three-manifold admits a
metric with $\Sc_g\geq -6$ but volume entropy larger than $2$
(see \cite{kazaras2024scalarcurvaturevolumeentropy}). Corollary
\ref{cor:hyperbolic-rigidity} shows that the sharp bottom spectrum is more
rigid in this scalar-curvature setting, preserving the hyperbolic value of
$\lambda_1$ already forces the metric to be hyperbolic.

We also have the following geometric version. The essential topological
assumption in Theorem~\ref{thm:rigidity} is the Strong Novikov Conjecture for
the fundamental group $\Gamma$, and the existence of a nonpositively curved metric
is one geometric condition ensuring this assumption. Thus the same sharp spectral condition
becomes a pure rigidity criterion for the metric $g$.

\begin{corollary}
\label{cor:npc-rigidity}
Let $M^n$ be a closed manifold admitting a Riemannian metric of
nonpositive sectional curvature, and let $g$ be a Riemannian metric on $M$
with $\Sc_g\geq -n(n-1)$. If
\[
    \lambda_1(\widetilde M,\widetilde g)=\frac{(n-1)^2}{4},
\]
then $(M,g)$ is hyperbolic.
\end{corollary}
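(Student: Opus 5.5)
The plan is to deduce Corollary~\ref{cor:npc-rigidity} directly from Theorem~\ref{thm:rigidity}. This means checking the three hypotheses for $M$ equipped with an arbitrary metric $g$. The curvature and spectral hypotheses are given, so only the topological conditions need verification. Fix a metric $g_0$ on $M$ of nonpositive sectional curvature. By Cartan--Hadamard, the universal cover $(\widetilde M,\widetilde g_0)$ is diffeomorphic to $\mathbb R^n$ via the exponential map. Spin is a property of the smooth manifold, so $\widetilde M$ is spin regardless of which metric $g$ we use.

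Next, rational essentialness. Since $\widetilde M$ is contractible, $M$ is aspherical, so $M$ is a model for $B\Gamma$ and the classifying map $M\to B\Gamma$ is a homotopy equivalence. Aspherical closed manifolds are orientable up to a double cover, so we should handle the orientation issue. If $M$ is orientable, then $[M]\neq 0$ in $H_n(M;\mathbb Q)\cong H_n(B\Gamma;\mathbb Q)$. If $M$ is non-orientable, we pass to the orientation double cover $\hat M$ with the pulled-back metric $\hat g$. This cover is still closed, aspherical, and nonpositively curved, and it still satisfies $\Sc\ge -n(n-1)$. Its universal cover is the same $(\widetilde M,\widetilde g)$, so the bottom spectrum hypothesis is unchanged. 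Hyperbolicity is local, so it descends from $\hat g$ back to $g$. (Alternatively, the paper's convention for $[M]$ may implicitly assume orientability; in either case the reduction is harmless.)

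The substantive input is the Strong Novikov Conjecture for $\Gamma=\pi_1(M)$. Here I would invoke known results rather than prove anything. Since $\Gamma$ acts properly and cocompactly by isometries on the CAT(0) space $(\widetilde M,\widetilde g_0)$, one can use either of two routes. The first is Kasparov's theorem: such groups act properly on a complete simply connected nonpositively curved manifold, and the Dirac--dual-Dirac method gives injectivity of the Baum--Connes assembly map. The second is the coarse Baum--Connes conjecture for $\widetilde M$, which follows from finite asymptotic dimension or from coarse embeddability into Hilbert space of CAT(0) cocompact groups (Higson--Roe, Yu). Either route yields rational injectivity of $K_*(B\Gamma)\to K_*(C^*_r\Gamma)$, which is exactly the Strong Novikov Conjecture.

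With all hypotheses verified, Theorem~\ref{thm:rigidity} gives that $(M,g)$ is hyperbolic. The main point of care is this last step: citing the correct version of the Strong Novikov Conjecture for fundamental groups of nonpositively curved manifolds. The orientability bookkeeping is routine by comparison.
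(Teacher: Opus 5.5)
Your argument is correct and follows the route the paper intends: the corollary is a direct consequence of Theorem~\ref{thm:rigidity}, with the nonpositively curved metric supplying the topological hypotheses. Cartan--Hadamard gives spin and asphericity (hence rational essentialness), Kasparov's theorem gives the Strong Novikov Conjecture, and your orientation double-cover reduction handles a point the paper leaves implicit. Rely only on the Kasparov citation, though: as far as I know, neither finite asymptotic dimension nor coarse embeddability into Hilbert space is established for general cocompact CAT(0) groups, so your second justification should be dropped.
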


\medskip
\subsection*{Difficulties of the main theorem}

The main difficulty is that the assumption is a lower bound for scalar
curvature rather than for Ricci curvature. In the scalar curvature setting,
the classical techniques, volume comparison and harmonic function techniques based on Ricci curvature, are unavailable, and
X. Wang's rigidity theorem cannot be applied directly. The key point of the
proof is therefore to recover the missing Ricci curvature information from the
equality case of the sharp spinorial estimate. The new step is to
turn scalar curvature rigidity into full geometric rigidity.

Recall that the sharp bottom spectrum estimate in the authors' previous work is based on almost-harmonic
spinors on the universal cover (see
Theorem~\ref{thm: cocompact_rigidity}). In the equality case, these spinors
need not have a global limit on the noncompact cover, and their mass may
escape to infinity. Lemma~\ref{lemma:limit} overcomes this by localizing the
almost-equality information and recentering the spinors by suitable deck
transformations. This produces a nontrivial local limiting spinor satisfying
the equality conditions in Propositions~\ref{prop:refinedKato} and
\ref{prop:Lichnerowicz}.

These equality conditions are the new rigidity mechanism that yields rigid local
first-order spinorial equations. These local equations then
convert into a pointwise relation involving Ricci curvature. Combining
this relation with the scalar curvature equality and the Laplacian equation
for the norm of the local spinor, we show that the traceless Ricci tensor vanishes. Thus the
proof upgrades the scalar curvature equality to the Einstein condition
$\Ric_g=-(n-1)g$, after which X. Wang's sharp bottom-spectrum rigidity theorem
implies Theorem~\ref{thm:rigidity}.

\subsection*{Organization of the paper}
Section~\ref{sec:ingredients}, we recall the analytic input from the sharp
bottom spectrum estimate, including the nonvanishing higher index, the
refined Kato inequality, and the equality statement for the scalar curvature.
Section~\ref{sec:proof-rigidity}, we first prove the localization argument,
constructs the strongly constrained local spinor arising in the equality
case, and then translates this spinorial information into geometry to
complete the proof of Theorem~\ref{thm:rigidity}.

\medskip

\textbf{Acknowledgments.} We thank S. T. Yau, J. P. Wang, and X. D. Wang for helpful discussions and comments.

\section{Some essential ingredients}\label{sec:ingredients}
In this section, we recall the main analytic ingredients from the authors'
previous work (see \cite[Theorem~1.2]{WangZhu24}) that will be used in the
rigidity argument. The sharp bottom spectrum theorem has two parts, the
estimate
\[
    \lambda_1(\widetilde M,\widetilde g)
    \leq \frac{n-1}{4n}\kappa
\]
under the scalar curvature lower bound $\Sc_g\geq -\kappa$, and the equality
statement that equality forces $\Sc_g\equiv -\kappa$.

The proof starts from the higher index of the Dirac operator. Under the
topological assumptions in Theorem~\ref{thm: cocompact_rigidity}, this index
is nonzero. Hence the lifted Dirac operator on the universal cover cannot be
invertible, so zero lies in its spectrum. This gives the sequence of
almost-harmonic spinors used as test sections in the spectral estimate.
\begin{proposition}\label{prop:zeroInSpec}
Under the topological assumptions in Theorem~\ref{thm: cocompact_rigidity},
zero lies in the spectrum of the lifted Dirac operator $\widetilde D$.
\end{proposition}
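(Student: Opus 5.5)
The plan is to argue by contradiction: assume $0\notin\operatorname{spec}(\widetilde D)$, show that this forces the higher index to vanish, and contradict the topological hypotheses.

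\emph{Setting.} Let $S\to\widetilde M$ be the spinor bundle of the spin universal cover, and let $\widetilde D$ be the lifted Dirac operator. Since $\Gamma$ acts cocompactly by isometries, $\widetilde D$ is $\Gamma$-equivariant and essentially self-adjoint on $L^2(\widetilde M,S)$ by completeness. One subtlety is that $M$ itself need not be spin. Following \cite{WangZhu24}, the higher index $\operatorname{ind}_\Gamma(\widetilde D)\in K_n(C^*_r\Gamma)$ is defined in the Roe-algebra / equivariant framework on $\widetilde M$. Equivalently, in the non-spin case it is defined via a $\Gamma$-equivariant spin structure twisted by a suitable extension of $\Gamma$. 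I take this construction as given from the previous work.

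\emph{Step 1 (index is nonzero).} Use rational essentialness together with the Strong Novikov Conjecture. The fundamental class $[M]$ is nonzero in $H_*(B\Gamma;\mathbb Q)$. The Chern character identifies the rationalized $K$-homology class $\mu_*[\widetilde D]$ with Poincaré dual data, and its leading term is $[M]$ up to the $\hat A$-class, which has degree-zero part $1$. Hence the image of $[\widetilde D]$ in $K_*(B\Gamma)\otimes\mathbb Q$ is nonzero. Rational injectivity of the assembly map $K_*(B\Gamma)\to K_*(C_r^*\Gamma)$ then gives $\operatorname{ind}_\Gamma(\widetilde D)\neq 0$.

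\emph{Step 2 (invertibility kills the index).} Suppose $0\notin\operatorname{spec}(\widetilde D)$. Then there is $\varepsilon>0$ with $\operatorname{spec}(\widetilde D)\cap(-\varepsilon,\varepsilon)=\emptyset$. Choose a normalizing function $\chi$ that equals $\pm1$ on $\pm[\varepsilon,\infty)$. By functional calculus, $\chi(\widetilde D)$ is then an honest involution with $\chi(\widetilde D)^2=1$, and the corresponding spectral projections are well defined.
\begin{itemize}
\item In the even-dimensional case, the index class is represented by the formal difference built from $\chi(\widetilde D)^2-1$; since this vanishes identically, the class is zero.
\item In the odd-dimensional case, the index is the class of $e^{2\pi i(\chi(\widetilde D)+1)/4}$; since $\chi(\widetilde D)$ is an involution, this unitary is trivial and the class again vanishes.
\end{itemize}
This is the standard fact that an invertible equivariant operator has vanishing index in $K_*(C^*(\widetilde M)^\Gamma)\cong K_*(C_r^*\Gamma)$. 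The previous step gives the contradiction, so $0\in\operatorname{spec}(\widetilde D)$.

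\emph{Main obstacle.} The delicate step is Step 1. It requires correctly identifying the $K$-homology class of the (possibly twisted) Dirac operator and verifying that its image in rational $K$-homology of $B\Gamma$ detects $[M]$. In the non-spin-$M$ case, this also requires checking that the twisting by the extension does not destroy nonvanishing. I would handle this by citing the corresponding argument in \cite{WangZhu24}, since the hypotheses there are identical.
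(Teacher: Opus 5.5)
Your argument is the paper's: rational essentialness and the Strong Novikov Conjecture make the higher index of $\widetilde D$ nonzero, while an invertible $\widetilde D$ would have vanishing index, so $0\in\operatorname{spec}(\widetilde D)$.

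One slip to fix in Step 2 concerns the odd-dimensional case. There the index is the class of $e^{2\pi i P}$ with $P=(\chi(\widetilde D)+1)/2$, and $e^{2\pi i P}=1$ when $\chi(\widetilde D)$ is an involution. Your exponent $2\pi i(\chi(\widetilde D)+1)/4$ instead gives $e^{\pi i P}=1-2P=-\chi(\widetilde D)$, which is not trivial.
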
 

The second ingredient is a quantitatively refined Kato inequality. It converts
control of the spinor into control of its length function, which is the bridge
from the Dirac operator to the sharp bottom spectrum of the Laplacian. We give
a proof for completeness.

\begin{proposition}\label{prop:refinedKato}
There exists a dimensional constant $c_n>0$ such that for any smooth spinor
$\sigma$, one has
\[
    \frac{n}{n-1}|\nabla|\sigma||^2
    \leq |\nabla\sigma|^2
    +c_n|\widetilde D\sigma|^2+c_n|\widetilde D\sigma|\,|\nabla\sigma|.
\]
\end{proposition}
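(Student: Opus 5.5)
The plan is to derive the inequality from the standard refined Kato inequality for twistor-free parts. We decompose $\nabla\sigma$ orthogonally and then absorb the Dirac term with elementary algebra. At a point where $\sigma\neq0$, set $|\sigma|$ differentiable. By Cauchy--Schwarz,
\[
|\nabla|\sigma||=\frac{|\langle\nabla\sigma,\sigma\rangle|}{|\sigma|}=\sup_{|X|=1}\frac{\langle\nabla_X\sigma,\sigma\rangle}{|\sigma|}.
\]
Where $\sigma=0$ we work instead with $\sqrt{|\sigma|^2+\epsilon^2}$ and let $\epsilon\to0$, or use that $|\sigma|$ is Lipschitz and $\nabla|\sigma|=0$ a.e.\ on the zero set. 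So it suffices to prove the pointwise inequality at points where $\sigma\neq0$.

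\textbf{Decomposition.} Write
\[
\nabla_X\sigma=P_X\sigma-\tfrac1n X\cdot\widetilde D\sigma,
\]
where $P$ is the twistor (Penrose) operator, with $\sum_i e_i\cdot P_{e_i}\sigma=0$. This gives
\[
|\nabla\sigma|^2=|P\sigma|^2+\tfrac1n|\widetilde D\sigma|^2.
\]
The classical refined Kato estimate for sections in the kernel of Clifford contraction says
\[
|\langle P\sigma,\sigma\rangle|^2/|\sigma|^2\le \tfrac{n-1}{n}|P\sigma|^2 .
\]
To verify it, pick the unit vector $X$ realizing the supremum, and consider the linear map $T:X\mapsto \langle P_X\sigma,\sigma\rangle/|\sigma|$. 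The constraint $\sum e_i\cdot P_{e_i}\sigma=0$ forces
\[
P_X\sigma=-\sum_{e_i\perp X}X\cdot e_i\cdot P_{e_i}\sigma .
\]
Pairing with $\sigma$ and applying Cauchy--Schwarz yields $|T(X)|^2\le (n-1)\sum_{e_i\perp X}|P_{e_i}\sigma|^2$. Adding $|T(X)|^2\le |P_X\sigma|^2$ with the appropriate weights gives $n|T(X)|^2\le(n-1)|P\sigma|^2$. This is the standard computation, and I would reproduce it briefly.

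\textbf{Combining.} Let $a=|\langle P\sigma,\sigma\rangle|/|\sigma|$. Then $|\nabla|\sigma||\le a+\frac1n|\widetilde D\sigma|$, since $|X\cdot\widetilde D\sigma|=|\widetilde D\sigma|$. Hence
\[
\tfrac{n}{n-1}|\nabla|\sigma||^2\le \tfrac{n}{n-1}a^2+\tfrac{2}{n-1}a|\widetilde D\sigma|+\tfrac{1}{n(n-1)}|\widetilde D\sigma|^2 .
\]
The first term is at most $|P\sigma|^2\le|\nabla\sigma|^2$. The cross term is bounded using $a\le|P\sigma|\le|\nabla\sigma|$, and the last term directly. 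This gives the claim with $c_n=2/(n-1)$, say (any $c_n\ge \max\{2/(n-1),1/(n(n-1))\}$ works).

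\textbf{Main obstacle.} The only delicate step is the sharp constant $\frac{n-1}{n}$ in the twistor Kato inequality, which is the Clifford-algebra computation above. The zero set of $\sigma$ is handled by the $\epsilon$-regularization, since both sides are continuous in $\epsilon$.
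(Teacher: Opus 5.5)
Your proposal is correct and follows essentially the same route as the paper: the Penrose decomposition $\nabla_X\sigma=P_X\sigma-\frac1n X\cdot\widetilde D\sigma$, the algebraic refined Kato bound with constant $\frac{n-1}{n}$ for the part in the kernel of Clifford contraction, and treatment of the $\widetilde D\sigma$ contributions as lower-order errors. The differences are cosmetic: you apply the triangle inequality to the covector $\nabla|\sigma|$ instead of expanding $\sum_i(A_i+B_i)^2$, and you prove the algebraic Kato step rather than citing it, which gives the explicit constant $c_n=2/(n-1)$.
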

\begin{proof}
Let $\{e_i\}_{i=1}^n$ be a local orthonormal frame and set
$\sigma_i=\nabla_{e_i}\sigma$. At a point where $\sigma\neq 0$,
\[
    |\sigma|^2|\nabla|\sigma||^2
    =\sum_{i=1}^n\bigl(\operatorname{Re}\langle\sigma_i,\sigma\rangle\bigr)^2.
\]
Define the Penrose operator by
\[
    \mathcal P_i\sigma
    :=\nabla_{e_i}\sigma+\frac1n c(e_i)\widetilde D\sigma .
\]
By the Clifford relation, $\sum_i c(e_i)\mathcal P_i\sigma=0$. The same
relation also gives the norm identity
\begin{equation}\label{eq:penrose-norm-identity}
    \sum_{i=1}^n|\mathcal P_i\sigma|^2
    =|\nabla\sigma|^2-\frac1n|D\sigma|^2 .
\end{equation}
Moreover, the algebraic refined Kato inequality for sections in the kernel of
Clifford contraction
(see \cite[Lemma~3.2]{WangZhu24}) gives
\begin{equation}\label{eq:penrose-refined-kato}
    \sum_{i=1}^n
    \bigl(\operatorname{Re}\langle\mathcal P_i\sigma,\sigma\rangle\bigr)^2
    \leq \frac{n-1}{n}|\sigma|^2\sum_{i=1}^n|\mathcal P_i\sigma|^2 .
\end{equation}

We now compare $\nabla\sigma$ with its Penrose part. Since
\[
    \sigma_i=\mathcal P_i\sigma-\frac1n c(e_i)\widetilde D\sigma,
\]
write
\[
    A_i=\operatorname{Re}\langle\mathcal P_i\sigma,\sigma\rangle,
    \qquad
    B_i=-\frac1n\operatorname{Re}\langle c(e_i)\widetilde D\sigma,\sigma\rangle .
\]
Then $\operatorname{Re}\langle\sigma_i,\sigma\rangle=A_i+B_i$. Therefore
\[
\begin{aligned}
    |\sigma|^2|\nabla|\sigma||^2
    &=\sum_{i=1}^n(A_i+B_i)^2  \\
    &=\sum_{i=1}^n A_i^2
      +2\sum_{i=1}^n A_iB_i
      +\sum_{i=1}^n B_i^2 .
\end{aligned}
\]
The $A_i$-term has the sharp coefficient. Indeed, by
\eqref{eq:penrose-refined-kato} and \eqref{eq:penrose-norm-identity},
\begin{equation}\label{eq:A-term-estimate}
    \sum_{i=1}^n A_i^2
    \leq \frac{n-1}{n}|\sigma|^2
    \left(|\nabla\sigma|^2-\frac1n|\widetilde D\sigma|^2\right)
    \leq \frac{n-1}{n}|\sigma|^2|\nabla\sigma|^2 .
\end{equation}
The $B_i$-terms are lower order. Since Clifford multiplication by a unit
vector is an isometry,
\[
    |B_i|\leq \frac1n |D\sigma|\,|\sigma|,
    \qquad
    \sum_{i=1}^n B_i^2\leq \frac1n |D\sigma|^2|\sigma|^2 .
\]
Finally, Cauchy--Schwarz and \eqref{eq:A-term-estimate} give
\[
\begin{aligned}
    \left|\sum_{i=1}^n A_iB_i\right|
    \leq
    \left(\sum_{i=1}^n A_i^2\right)^{1/2}
    \left(\sum_{i=1}^n B_i^2\right)^{1/2}  \leq
    \sqrt{\frac{n-1}{n^2}}|\sigma|^2|\widetilde D\sigma|\,|\nabla\sigma|.
\end{aligned}
\]
Combining the preceding estimates, we obtain, for a dimensional constant
$c_n>0$,
\[
    |\sigma|^2|\nabla|\sigma||^2
    \leq \frac{n-1}{n}|\sigma|^2|\nabla\sigma|^2
    +c_n|\sigma|^2|\widetilde D\sigma|^2
    +c_n|\sigma|^2|\widetilde D\sigma|\,|\nabla\sigma|.
\]
Dividing by $|\sigma|^2$ proves the
desired inequality at all points where $\sigma\neq0$. The inequality extends
to the zero set by continuity.
\end{proof}

The scalar curvature rigidity part relies on a quantitative unique
continuation theorem for low-energy spinors of the Dirac operator. This result
is the main analytic input in the rigidity argument, which prevents such spinors
from concentrating away from a prescribed orbit net.
\begin{theorem}\label{thm:uniqueContinuation}
Let $(M,g)$ be a closed Riemannian manifold with fundamental group
$\Gamma$, and let $(\widetilde M,\widetilde g)$ be its Riemannian universal
cover. Assume that $\widetilde M$ is spin, and let $\widetilde D$ be the
Dirac operator on $\widetilde M$. Fix $p\in M$ and a lift $q\in\widetilde M$
of $p$. Then, for every $r>0$, there exist constants $c_1,c_2>0$ such that
for every $\lambda>0$ and every spinor $\sigma\in L^2(S(T\widetilde M))$
lying in the spectral subspace of $\widetilde D^2$ with spectrum
$\leq\lambda$, one has
\[
    \int_{\widetilde M}|\sigma|^2
    \leq c_1\exp(c_2\lambda)
    \sum_{\gamma\in\Gamma}\int_{B_r(\gamma q)}|\sigma|^2 .
\]
\end{theorem}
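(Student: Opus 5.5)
We plan to reduce the statement to a Carleman-type (quantitative unique continuation) estimate on a single fundamental domain and then sum over the deck group. Throughout, we use that $\Gamma$ acts cocompactly and by isometries, commuting with $\widetilde D$. Fix a compact fundamental domain $F\ni q$ with diameter $d$, and put $R=d+r+1$. Then $\widetilde M=\bigcup_\gamma \gamma F$, and each $\gamma F\subset B_R(\gamma q)$. The balls $B_{2R}(\gamma q)$ have bounded overlap multiplicity $N$, depending only on $(M,g)$, $R$ and a volume comparison on the cover.

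\emph{Step 1 (local doubling for the Dirac operator).} The key local claim is as follows. There are constants $C_1,C_2$, depending only on $r$, $R$ and the geometry of $B_{2R}(q)$ (which is uniform by deck invariance), such that for every smooth spinor $\psi$ on $B_{2R}(\gamma q)$,
\[
\int_{B_{R}(\gamma q)}|\psi|^2\le C_1 e^{C_2\lambda}\Bigl(\int_{B_r(\gamma q)}|\psi|^2+\lambda^{-1}\!\!\int_{B_{2R}(\gamma q)}|\widetilde D^2\psi-\text{(low-frequency part)}|^2\Bigr),
\]
or, more usefully, a version adapted to spectral subspaces. To obtain this, we will use the standard Jerison--Lebeau/Lebeau--Robbiano device. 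Given $\sigma=\chi_{[0,\lambda]}(\widetilde D^2)\sigma$, form the harmonic extension
\[
u(t,x)=\int\frac{\sinh(t\mu)}{\mu}\,dE_\mu\sigma
\]
on $(-T,T)\times\widetilde M$. Here $E$ is the spectral measure of $|\widetilde D|$, with the convention $\sinh(t\mu)/\mu\to t$ at $\mu=0$. This $u$ satisfies the elliptic equation $(\partial_t^2-\widetilde D^2)u=0$, whose principal part is scalar (namely $\partial_t^2+\Delta$ plus lower order terms by Lichnerowicz). Carleman estimates for scalar principal-part elliptic systems then give a three-ball/interpolation inequality for $u$ with uniform constants on $(-T,T)\times B_{2R}(\gamma q)$. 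The inputs $\|u\|\lesssim e^{T\sqrt\lambda}\|\sigma\|$ and $\partial_t u|_{t=0}=\sigma$ yield the bound with $e^{C\sqrt\lambda}$, which is stronger than the required $e^{c_2\lambda}$.

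\emph{Step 2 (globalization).} We apply the local interpolation inequality on each $\gamma$-translate. It has the form
\[
\|\sigma\|_{L^2(\gamma F)}\le C\|u\|_{H^1(\text{big}_\gamma)}^{1-\alpha}\,\|\sigma\|_{L^2(B_r(\gamma q))}^{\alpha}+\dots,
\]
with $\alpha$ and $C$ independent of $\gamma$. We then sum over $\gamma$, using Hölder's inequality for sums, $\sum a_\gamma^{1-\alpha}b_\gamma^{\alpha}\le(\sum a_\gamma)^{1-\alpha}(\sum b_\gamma)^{\alpha}$, together with the bounded overlap $N$. This gives
\[
\|\sigma\|^2\le C e^{C\sqrt\lambda}\|\sigma\|^{2(1-\alpha)}\Bigl(\sum_\gamma\int_{B_r(\gamma q)}|\sigma|^2\Bigr)^{\alpha},
\]
and dividing yields the claim, with $c_2\lambda$ absorbing $C\sqrt\lambda/\alpha$ after enlarging constants.

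\emph{Main obstacle.} The delicate point is Step 1. We need a Carleman/three-ball estimate for the spinor-valued operator $\partial_t^2-\widetilde D^2$ whose constants are uniform over all deck translates. We must also handle the boundary at $t=0$, where $u$ vanishes and $\partial_tu=\sigma$; this requires a boundary Carleman estimate in the style of Lebeau--Robbiano, or an odd reflection in $t$. Uniformity is automatic from $\Gamma$-invariance of the metric and the spin structure. Scalar principal symbol lets us run the scalar Carleman argument componentwise in a local trivialization, treating connection terms as first-order perturbations.
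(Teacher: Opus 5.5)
The paper contains no proof of this statement to compare against. It is recalled from the authors' earlier work \cite{WangZhu24} as a black box, and here it is only applied with $\lambda=\varepsilon^2<1$ (in the proof of Lemma~\ref{lemma:limit}). So the rate in $\lambda$ plays no role; all that matters is a constant uniform for $\lambda\le 1$.

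Your proposal is therefore a self-contained alternative, and it is sound. It is the standard Lebeau--Robbiano route to spectral inequalities on cocompact, or bounded-geometry, spaces:
\begin{itemize}
\item The extension $u=\frac{\sinh(t|\widetilde D|)}{|\widetilde D|}\sigma$ solves $(\partial_t^2-\nabla^*\nabla-\tfrac14\Sc)u=0$. Its principal symbol is scalar, so scalar Carleman estimates apply, with connection terms absorbed as first-order perturbations.
\item The local constants depend only on the isometry class of $B_{2R}(\gamma q)$, so they are uniform in $\gamma$. If $M$ is not spin, phrase this via the lifted $\hat\Gamma$-action, as the paper does.
\item H\"older's inequality over $\gamma$ plus bounded overlap globalizes the estimate. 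You even get the sharper factor $e^{C\sqrt\lambda}\le e^{C}e^{C\lambda}$.
\end{itemize}

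Three points should be tightened.

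First, discard the ``key local claim'' at the start of Step~1. Spectral projections of $\widetilde D^2$ are nonlocal, so the ``low-frequency part'' of a spinor on a ball is undefined. The correct local statement is an interpolation inequality for $u$ on $(-T,T)\times B_{2R}(\gamma q)$.

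Second, odd reflection alone does not start the propagation of smallness. Your small quantity is Cauchy data ($u=0$, $\partial_tu=\sigma$) on the hypersurface patch $\{0\}\times B_r(\gamma q)$, not the size of $u$ on an open set. Before running three-ball chains you need a Cauchy-stability estimate, for instance Lebeau--Robbiano's boundary Carleman estimate, which takes $\|\partial_tu(0,\cdot)\|_{L^2(B_r)}$ as data. After reflection, $\{t=0\}$ is interior, so the chain reaches a neighbourhood of $\{0\}\times\overline{B_R(\gamma q)}$. Interior elliptic estimates then bound $\|\sigma\|_{L^2(\gamma F)}=\|\partial_tu(0,\cdot)\|_{L^2(\gamma F)}$ by $\|u\|_{H^1}$ on that neighbourhood, which is what justifies the left-hand side of your Step~2. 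If you instead use the one-sided version controlling $u$ only on $(\delta,T-\delta)\times B_R(\gamma q)$, you also need the global lower bound $\int_\delta^T\|u(t)\|^2\,dt\ge c\|\sigma\|^2$ (from $\sinh(t\mu)/\mu\ge t$). This is missing from your list of inputs.

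Third, the spatial part of the upper bound $\|u\|_{H^1((-T,T)\times\widetilde M)}\lesssim(1+\lambda)^{1/2}e^{T\sqrt\lambda}\|\sigma\|$ comes from the Lichnerowicz formula,
\[
\|\nabla u(t)\|^2\le\bigl(\lambda+\tfrac14\sup|\Sc_g|\bigr)\|u(t)\|^2,
\]
which uses cocompactness.
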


The following estimate is the Dirac version of the low-energy control needed
below.

\begin{proposition}\label{prop:Lichnerowicz}
Assume the hypotheses of Theorem~\ref{thm:rigidity}. Then there
exists a constant $C>0$ with the following property. For every
$0<\varepsilon\leq 1$ and every
$\sigma\in H^2(\widetilde M,S(T\widetilde M))$ contained in the spectral
subspace of $\widetilde D$ corresponding to the interval
$[-\varepsilon,\varepsilon]$, one has
\begin{enumerate}
\item 
$\displaystyle\int_{\widetilde M}(|\widetilde D\sigma|^2+|\widetilde D^2\sigma^2|)\leq (\varepsilon^2+\varepsilon^4)\|\sigma\|^2;$
\item the refined Kato defect estimate
    \[
    0\leq \int_{\widetilde M}
    \left(|\nabla\sigma|^2-\frac{n}{n-1}|\nabla|\sigma||^2
    +c_n|\widetilde D\sigma|^2
    +c_n|\widetilde D\sigma||\nabla\sigma|\right)
    \leq C\varepsilon\|\sigma\|^2;
    \]

\item the bottom-spectrum defect estimate
    \[
    0\leq \int_{\widetilde M}
    \left(|\nabla|\sigma||^2-\frac{(n-1)^2}{4}|\sigma|^2\right)
    \leq C\varepsilon\|\sigma\|^2 .
    \]
\end{enumerate}
\end{proposition}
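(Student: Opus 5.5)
Part (1) is spectral calculus: write $\sigma=\int_{[-\varepsilon,\varepsilon]}dE_t\sigma$ for the spectral measure of $\widetilde D$. Then $\|\widetilde D\sigma\|^2=\int t^2\,d\langle E_t\sigma,\sigma\rangle\le\varepsilon^2\|\sigma\|^2$, and likewise $\|\widetilde D^2\sigma\|^2\le\varepsilon^4\|\sigma\|^2$. I read the second integrand as $|\widetilde D^2\sigma|^2$; the intended inequality follows by adding the two bounds.

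For (2) and (3) I would use the Lichnerowicz formula $\widetilde D^2=\nabla^*\nabla+\tfrac14\Sc$. Since $\sigma\in H^2$ on a complete manifold, integration by parts is justified and gives
\[
\int|\nabla\sigma|^2=\int|\widetilde D\sigma|^2-\frac14\int\Sc\,|\sigma|^2\le\varepsilon^2\|\sigma\|^2+\frac{n(n-1)}4\|\sigma\|^2 .
\]
The lower bounds in both defects are immediate. In (2), Proposition~\ref{prop:refinedKato} holds pointwise. In (3), $|\sigma|\in H^1$ is Lipschitz-approximable by compactly supported functions, so the definition of $\lambda_1$ together with the hypothesis $\lambda_1=(n-1)^2/4$ gives $\int|\nabla|\sigma||^2\ge\frac{(n-1)^2}4\|\sigma\|^2$.

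For the upper bounds, combine the pieces and integrate the defect of (2):
\[
\int\Bigl(|\nabla\sigma|^2-\tfrac{n}{n-1}|\nabla|\sigma||^2+c_n|\widetilde D\sigma|^2+c_n|\widetilde D\sigma||\nabla\sigma|\Bigr)
\le \tfrac{n(n-1)}4\|\sigma\|^2+\varepsilon^2\|\sigma\|^2-\tfrac{n}{n-1}\cdot\tfrac{(n-1)^2}{4}\|\sigma\|^2+c_n\varepsilon^2\|\sigma\|^2+c_n\|\widetilde D\sigma\|\,\|\nabla\sigma\| .
\]
The two leading terms cancel exactly, since $\frac{n}{n-1}\cdot\frac{(n-1)^2}{4}=\frac{n(n-1)}4$; this is precisely the sharpness. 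The cross term is at most $c_n\varepsilon\sqrt{n(n-1)/4+1}\,\|\sigma\|^2$ by Cauchy--Schwarz. So the defect is $\le C\varepsilon\|\sigma\|^2$ for $\varepsilon\le1$.

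For (3), rewrite the pointwise refined Kato inequality as
\[
\int|\nabla|\sigma||^2\le\frac{n-1}{n}\int\bigl(|\nabla\sigma|^2+c_n|\widetilde D\sigma|^2+c_n|\widetilde D\sigma||\nabla\sigma|\bigr).
\]
Inserting the same bounds makes the right side at most $\frac{(n-1)^2}{4}\|\sigma\|^2+C\varepsilon\|\sigma\|^2$. Subtracting $\frac{(n-1)^2}4\|\sigma\|^2$ gives (3).

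The main point needing care is regularity and integrability on the noncompact cover. Concretely, one must justify integrating the Lichnerowicz identity by parts, that $|\sigma|\in H^1$ is admissible in the Rayleigh quotient, and that the Kato inequality holds almost everywhere on the zero set. I would handle these by cutoff functions $\phi_R$ with $|\nabla\phi_R|\le 1/R$, using bounded geometry from cocompactness, and by the standard fact that $\nabla|\sigma|=0$ a.e. on $\{\sigma=0\}$. The constant $C$ depends only on $n$ and $c_n$, because $\Sc\ge -n(n-1)$ is used only as a lower bound.
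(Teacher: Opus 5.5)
Your proposal is correct and is essentially the paper's argument: the Lichnerowicz formula, the integrated refined Kato inequality and the sharp bottom spectrum combine through the exact cancellation $\frac{n}{n-1}\cdot\frac{(n-1)^2}{4}=\frac{n(n-1)}{4}$, with the cross term handled by Cauchy--Schwarz. The one difference is that the paper first invokes $\Sc_g\equiv -n(n-1)$ from Theorem~\ref{thm: cocompact_rigidity} to write the sum of the two defects as an exact identity, whereas you use only $\Sc_g\geq -n(n-1)$ and bound each defect using the nonnegativity of the other, so your version does not need that scalar-curvature equality statement.
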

\begin{proof}
As shown in Theorem~\ref{thm: cocompact_rigidity}, we first observe that
$\Sc_g=-n(n-1)$. Since $\sigma$ belongs to the spectral subspace of
$\widetilde D$ corresponding to $[-\varepsilon,\varepsilon]$, the functional
calculus gives
\[
    \|\widetilde D\sigma\|\leq \varepsilon\|\sigma\|,\qquad\textup{and}\qquad\|\widetilde D^2\sigma\|\leq \varepsilon^2\|\sigma\|.
\]

Set
\[
\begin{split}
A(\sigma):=&\int_{\widetilde M}
    \left(|\nabla\sigma|^2
    -\frac{n}{n-1}|\nabla|\sigma||^2
    +c_n|\widetilde D\sigma|^2
    +c_n|\widetilde D\sigma||\nabla\sigma|\right),\\
B(\sigma):=&\int_{\widetilde M}
    \left(|\nabla|\sigma||^2
    -\frac{(n-1)^2}{4}|\sigma|^2\right).
\end{split}
\]
By the refined Kato inequality, $A(\sigma)\geq 0$. By the sharp bottom
spectrum assumption, $B(\sigma)\geq 0$.

By the Lichnerowicz formula
$$\widetilde D^2=\nabla^*\nabla+\frac{\Sc_{\widetilde g}}{4}=\nabla^*\nabla-\frac{n(n-1)}{4},$$
we have
\begin{equation}\label{eq:Lichnerowicz}
\begin{split}
\int_{\widetilde M} |\widetilde D\sigma|^2
=&\int_{\widetilde M}|\nabla\sigma|^2
    -\frac{n(n-1)}{4}\int_{\widetilde M}|\sigma|^2\\
=&\int_{\widetilde M}
    \left(|\nabla\sigma|^2
    -\frac{n}{n-1}|\nabla|\sigma||^2\right)\\
&+\frac{n}{n-1}\int_{\widetilde M}
    \left(|\nabla|\sigma||^2
    -\frac{(n-1)^2}{4}|\sigma|^2\right).
\end{split}
\end{equation}

We now add the nonnegative error terms from the refined Kato inequality to
both sides of \eqref{eq:Lichnerowicz}. This gives
\begin{equation}\label{eq:low-energy-defects}
A(\sigma)+\frac{n}{n-1}B(\sigma)=\int_{\widetilde M}
    \left(|\widetilde D\sigma|^2
    +c_n|\widetilde D\sigma|^2
    +c_n|\widetilde D\sigma||\nabla\sigma|\right).
\end{equation}

By the Lichnerowicz formula and
$\Sc_{\widetilde g}=-n(n-1)$, we have
\[
    \|\nabla\sigma\|^2
    =\|\widetilde D\sigma\|^2+\frac{n(n-1)}{4}\|\sigma\|^2
    \leq
    \left(\varepsilon^2+\frac{n(n-1)}{4}\right)\|\sigma\|^2 .
\]
Consequently, Cauchy--Schwarz yields
\begin{equation*}
\begin{split}
&\int_{\widetilde M}
    \left(|\widetilde D\sigma|^2+c_n|\widetilde D\sigma|^2
    +c_n|\widetilde D\sigma||\nabla\sigma|\right)\\
&\qquad \leq
    (1+c_n)\|\widetilde D\sigma\|^2
    +c_n\|\widetilde D\sigma\|\,\|\nabla\sigma\|\\
&\qquad \leq
    \left((1+c_n)\varepsilon^2
    +c_n\varepsilon
    \sqrt{\varepsilon^2+\frac{n(n-1)}{4}}\right)\|\sigma\|^2 .
\end{split}
\end{equation*}

Since $0<\varepsilon\leq 1$, the right-hand side is bounded by
$C\varepsilon\|\sigma\|^2$ for some dimensional constant $C>0$. Equation
\eqref{eq:low-energy-defects} and the preceding estimate give
\[
    A(\sigma)+\frac{n}{n-1}B(\sigma)
    \leq C\varepsilon\|\sigma\|^2 .
\]
Since both terms on the left are nonnegative, the two claimed estimates
follow.
\end{proof}

Proposition~\ref{prop:Lichnerowicz} provides the uniform estimates needed for
the equality case, but passing from these estimates to a geometrically useful
limit requires some care. The first issue is the noncompactness of the
universal cover. Since $\widetilde M$ is noncompact in general, there is no
global compactness theorem that would give strong convergence of a sequence of
spinors on all of $\widetilde M$.

The second issue is nonvanishing. A locally strongly convergent subsequence
may still have zero limit, because its $L^2$-mass may escape to infinity. We
therefore use the deck transformation group to translate the relevant mass
back into a fixed ball before taking the limit. A third issue is local
analytic in nature. The vanishing of the Kato defect will give a nonlinear
first-order equation \eqref{eq:Kato}. This non-linear relation does not directly create a  strictly positive local obstruction to the solvability of $\sigma$, which was utilized in \cite{WangZhu24}. Finally, the bottom-spectrum defect estimate (3) is established only in a global integral sense. It yields no direct geometric information for local integrals unless it is strictly paired with compactly supported local test functions. These difficulties will all be systematically addressed and resolved in Lemma \ref{lemma:limit} below.

\section{Proof of the rigidity theorem}\label{sec:proof-rigidity}
In this section, we prove Theorem~\ref{thm:rigidity}.

\subsection{Local limits of low-energy spinors}

We begin by constructing a nonzero local limit of low-energy spinors on the
universal cover.

\begin{lemma}\label{lemma:limit}
Assume the hypotheses of Theorem~\ref{thm:rigidity}. Let $p\in M$, let
$q\in\widetilde M$ be a lift of $p$, and choose $r>0$ such that
$3r$ is smaller than the injectivity radius of $(M,g)$. Then there exists a
nonzero smooth spinor $\sigma$ on $B_{2r}(q)$ satisfying
\begin{enumerate}[label=(\alph*)]
    \item \label{D=0} $\widetilde D\sigma=0$ on $B_{2r}(q)$;
    \item \label{Kato=} $\displaystyle|\nabla\sigma|^2
        =\frac{n}{n-1}|\nabla|\sigma||^2$, and
    \item \label{Delta=} $-\displaystyle\Delta|\sigma|
        =\frac{(n-1)^2}{4}|\sigma|$.
\end{enumerate}
\end{lemma}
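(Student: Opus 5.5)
We take $\varepsilon_k\to0$ and, using Proposition~\ref{prop:zeroInSpec}, pick unit spinors $\sigma_k$ in the spectral subspace of $\widetilde D$ for $[-\varepsilon_k,\varepsilon_k]$. Proposition~\ref{prop:Lichnerowicz} then gives three bounds: $\|\widetilde D\sigma_k\|+\|\widetilde D^2\sigma_k\|\to0$, a global Kato defect $A(\sigma_k)\le C\varepsilon_k$, and a global bottom-spectrum defect $B(\sigma_k)\le C\varepsilon_k$.

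\textbf{Step 1: localize the defects.} Cover $\widetilde M$ by the translates $\gamma B_{2r}(q)$, which have bounded overlap multiplicity $N$. Theorem~\ref{thm:uniqueContinuation}, applied with $\lambda=\varepsilon_k^2\le1$, gives
\[
\sum_\gamma\int_{B_r(\gamma q)}|\sigma_k|^2\ge c>0 .
\]
Let $a_\gamma$ be the localized defect density over $B_{2r}(\gamma q)$: the integrand of $A$ plus $|\widetilde D\sigma_k|^2+|\widetilde D^2\sigma_k|^2$. Then $\sum_\gamma a_\gamma\le NC'\varepsilon_k$. Comparing the two sums, pigeonhole produces some $\gamma_k$ with
\[
a_{\gamma_k}\le \frac{NC'\varepsilon_k}{c}\int_{B_r(\gamma_kq)}|\sigma_k|^2 .
\]
Translate by $\gamma_k^{-1}$ and renormalize so that $\int_{B_r(q)}|\sigma_k|^2=1$. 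After this, the local $L^2$ norm on $B_{3r}(q)$ could still be large. We therefore choose $\gamma_k$ to nearly maximize a ratio among balls. Better: we weight by $\int_{B_r}$ and use the fact that the total $L^2$ mass is comparable to $\sum_\gamma\int_{B_r(\gamma q)}$, so a positive proportion of the $\gamma$ have $\int_{B_{3r}(\gamma q)}|\sigma_k|^2\le K\int_{B_r(\gamma q)}|\sigma_k|^2$. A Chebyshev-type selection then secures both the defect bound and this doubling bound at once.

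\textbf{Step 2: compactness.} On $B_{3r}(q)$ the rescaled spinors satisfy three things: $\|\sigma_k\|_{L^2(B_{3r})}\le K$, $\|\widetilde D\sigma_k\|,\|\widetilde D^2\sigma_k\|\to0$ locally, and local elliptic estimates for the Dirac operator. These give uniform $H^2(B_{2r'})$ bounds for $r<r'<3r/2$, hence strong $H^1$ convergence on $B_{2r}(q)$ to a limit $\sigma$. This limit has $\int_{B_r}|\sigma|^2=1$, so it is nonzero, and $\widetilde D\sigma=0$ weakly. By elliptic regularity $\sigma$ is smooth, which gives \ref{D=0}. Since the localized Kato defect integrand is nonnegative (Proposition~\ref{prop:refinedKato}) and its integral tends to $0$, Fatou's lemma together with the strong $H^1$ convergence (recall $|\nabla|\sigma_k||\le|\nabla\sigma_k|$) yields equality \ref{Kato=} almost everywhere, and then everywhere by continuity away from the zero set.

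\textbf{Step 3 (main obstacle): the eigenfunction equation.} Defect (3) is global and does not localize pointwise. Instead we derive \ref{Delta=} from \ref{D=0}, \ref{Kato=} and Lichnerowicz. Set $u=|\sigma|$. By the Lichnerowicz formula with $\widetilde D\sigma=0$ and $\Sc=-n(n-1)$,
\[
\tfrac12\Delta u^2=|\nabla\sigma|^2-\tfrac{n(n-1)}4u^2 .
\]
Using \ref{Kato=}, this becomes
\[
u\Delta u=\tfrac1{n-1}|\nabla u|^2-\tfrac{n(n-1)}4u^2 .
\]
Substituting $w=u^{(n-2)/(n-1)}$ turns this into a linear equation, but not the desired one. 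So we additionally need the localized version of (3). The plan is to pair (3) with a cutoff: take $\phi$ supported in $B_{2r}$ and test the bottom-spectrum inequality $\int|\nabla f|^2\ge\frac{(n-1)^2}4\int f^2$ with $f=|\sigma_k|(1+t\phi)$, using the global defect of $\sigma_k$ to second order in $t$. This first-variation argument shows that $u$ is a weak solution of $-\Delta u=\frac{(n-1)^2}4u$ on $B_{2r}$. The key point is that $|\sigma_k|$ is an almost-minimizer of the Rayleigh quotient, so its Euler--Lagrange residual against the fixed test function $\phi$ tends to $0$, even after our renormalization. To make that last claim work, the translation in Step 1 must also keep the rescaled global defect small. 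For this we include the near-minimality and the selection of $\gamma_k$ in the same pigeonhole, arguing that the residual functional $\phi\mapsto\int\nabla|\sigma_k|\cdot\nabla\phi-\frac{(n-1)^2}4|\sigma_k|\phi$ is bounded by $\sqrt{B(\sigma_k)}\,\|\phi\|_{H^1}$. That bound is translation-invariant, so it is controlled after renormalization as long as the normalization loses at most a bounded factor.
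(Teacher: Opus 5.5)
You have a genuine gap in Step 3, where the eigenfunction equation (c) has to survive renormalization. Your Steps 1 and 2 essentially follow the paper: unique continuation, a mass-weighted pigeonhole combined with a doubling selection, then local elliptic compactness. Just make sure the $\widetilde D$-bounds are localized on the larger ball used in the interior estimates.

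\textbf{The gap.} After translating by $\gamma_k^{-1}$ and renormalizing, the residual $R_k(\phi)=\int\nabla|\sigma_k|\cdot\nabla\phi-\frac{(n-1)^2}{4}|\sigma_k|\phi$ is multiplied by $m_k^{-1}$, where $m_k^2=\int_{B_r(\gamma_kq)}|\sigma_k|^2$. Your translation-invariant bound therefore only gives $\sqrt{C\varepsilon_k}\,\|\phi\|_{H^1}/m_k$. Nothing bounds $m_k$ below in terms of $\sqrt{\varepsilon_k}$, because low-energy spinors spread their mass over more and more translates.

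Concretely, for any unit $\sigma$ in the range of $P_k=P_{[-\varepsilon_k,\varepsilon_k]}$ one has $\int_{B_r(q)}|\sigma|^2\le\operatorname{tr}_\Gamma(P_k)$. This is $O(\varepsilon_k)$ whenever the spectral density of $\widetilde D$ near $0$ is bounded, as in the hyperbolic case. In that case $\sqrt{\varepsilon_k}/m_k$ stays bounded below for every choice of $\sigma_k$ and $\gamma_k$. So the normalization does not ``lose at most a bounded factor.'' Nor can you simply put the near-minimality ``in the same pigeonhole.'' The integrand of $B(\sigma_k)$ has no sign, and the residual is a dual norm (a supremum over test functions), not a nonnegative additive quantity like your $a_\gamma$.

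\textbf{The missing idea} is to localize the dual norm of the residual itself.
\begin{itemize}
\item Since $Q\ge 0$, Cauchy--Schwarz gives $|Q(|\sigma_k|,\varphi)|\le\sqrt{C\varepsilon_k}\,\sqrt{Q(\varphi,\varphi)}$ for every global $\varphi$.
\item Let $\rho_\gamma$ be the supremum of $Q(|\sigma_k|,\psi)$ over $\psi\in C_c^\infty(B_{3r}(\gamma q))$ with $Q(\psi,\psi)=1$.
\item Choose nearly extremal $\varphi_\gamma$ supported in the pairwise disjoint balls $B_{3r}(\gamma q)$. Then $Q$ has no cross terms between them.
\item Glue them as $\varphi=\sum_\gamma t_\gamma\varphi_\gamma$ with $t_\gamma=\|\sigma_k\|_{L^2(B_{3r}(\gamma q))}$. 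Disjointness gives $\sum_\gamma t_\gamma^2\le 1$, hence $\sum_\gamma t_\gamma\rho_\gamma\le\sqrt{C\varepsilon_k}$ over any finite family.
\end{itemize}
This mass-weighted, summable quantity is what can be fed into the pigeonhole alongside $a_\gamma$ and the doubling condition. The paper does exactly this in condition (5) of its Claim, arguing by contradiction with threshold $\varepsilon^{1/3}$ against a total of order $\varepsilon^{1/2}$. The result is a $\gamma_k$ whose local residual is at most $\varepsilon_k^{1/3}$ times the local mass, which is precisely what survives renormalization.

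The construction needs disjoint supports. Your bounded-overlap family of $B_{2r}$-translates must therefore be replaced by disjoint balls, or split into finitely many disjoint subfamilies.
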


\begin{proof}
Set $\Gamma=\pi_1(M)$. We may assume, without loss of generality, that $M$ is
spin. If $\widetilde M$ is spin but $M$ is not, the same argument applies
after replacing $\Gamma$ by the double cover $\hat\Gamma$ acting on
$\widetilde M$, which is the standard lift of the $\Gamma$-action to the spin
structure (see \cite[Theorem~3.5]{MR720934}).
\medskip

By Proposition~\ref{prop:zeroInSpec},
zero lies in the spectrum of
$\widetilde D$. Hence, for every $\varepsilon\in(0,1)$, the spectral subspace
of $\widetilde D$ corresponding to $[-\varepsilon,\varepsilon]$ is nontrivial.
Choose a spinor $\sigma^\varepsilon$ in this subspace and normalize it by
$\|\sigma^\varepsilon\|=1$.

We localize these spinors near the orbit of $q$. For each
$\gamma\in\Gamma$, pull back the restriction of $\sigma^\varepsilon$ to
$B_{3r}(\gamma q)$ by the deck transformation $\gamma^{-1}$, and write
\[
    \sigma^\varepsilon_\gamma
    :=\gamma^{-1}\left(
    \sigma^\varepsilon\big|_{B_{3r}(\gamma q)}\right).
\]
Thus $\sigma^\varepsilon_\gamma$ is a spinor on $B_{3r}(q)$. Since
$\sigma^\varepsilon$ lies in the spectral subspace of $\widetilde D$ with
spectrum contained in $[-\varepsilon,\varepsilon]$,
Theorem~\ref{thm:uniqueContinuation}, applied with radius $2r$, gives
\[
    1=\|\sigma^\varepsilon\|^2
    \leq c_1\exp(c_2\varepsilon^2)
    \sum_{\gamma\in\Gamma}
    \int_{B_{2r}(\gamma q)}|\sigma^\varepsilon|^2 .
\]
Because $0<\varepsilon<1$, it follows that there exists a constant
$C_1>0$, independent of $\varepsilon$, such that
\[
    \sum_{\gamma\in\Gamma}
    \int_{B_{2r}(q)}|\sigma_\gamma^\varepsilon|^2\geq C_1 .
\]

We next record the consequence of the sharp bottom spectrum assumption that
will be used to control the distributional equation for
$|\sigma^\varepsilon|$. Define
\begin{equation*}
    Q(\varphi_1,\varphi_2)
    :=\int_{\widetilde M}
    \left(\langle\nabla\varphi_1,\nabla\varphi_2\rangle
    -\frac{(n-1)^2}{4}\varphi_1\varphi_2\right)
\end{equation*}
first for $\varphi_1,\varphi_2\in C_c^\infty(\widetilde M)$, and then extend
it by closure to $H^1(\widetilde M)$. Since
$\lambda_1(\widetilde M,\widetilde g)=\frac{(n-1)^2}{4}$, we have
\[
    Q(\varphi,\varphi)\geq 0, \qquad\forall\varphi\in H^1(\widetilde M).
\]
Thus the Cauchy--Schwarz inequality for this non-negative bilinear form gives
\[
    |Q(\varphi_1,\varphi_2)|
    \leq \sqrt{Q(\varphi_1,\varphi_1)\cdot Q(\varphi_2,\varphi_2)}, \qquad\forall\varphi_1,\varphi_2\in H^1(\widetilde M) .
\]
Applying this to $|\sigma^\varepsilon|$ and using
Proposition~\ref{prop:Lichnerowicz}, we obtain, for every
$\varphi\in C_c^\infty(\widetilde M)$,
\begin{equation}\label{eq:form-small}
    |Q(|\sigma^\varepsilon|,\varphi)|
    \leq \sqrt{Q(|\sigma^\varepsilon|,|\sigma^\varepsilon|)\cdot Q(\varphi,\varphi)}\leq
    \sqrt{C\varepsilon}\cdot \sqrt{Q(\varphi,\varphi)} .
\end{equation}

\medskip

\emph{Claim.} Suppose that $\varepsilon>0$ satisfies
\[
    \varepsilon<
    \min\left\{1,\left(\frac{C_1}{4(2+\sqrt{C}+C)}\right)^6\right\}.
\]
Then there exists $\gamma^\varepsilon\in\Gamma$ such that
\begin{enumerate}[label=(\arabic*)]
    \item \label{cond:ne0}
        $\sigma^\varepsilon_{\gamma^\varepsilon}$ is not identically zero
        on $B_{2r}(q)$;
    \item \label{cond:3<=2} $\displaystyle
        \|\sigma_{\gamma^\varepsilon}^\varepsilon\|^2
        \leq\frac{2}{C_1}\int_{B_{2r}(q)}
        |\sigma_{\gamma^\varepsilon}^\varepsilon|^2$;
    \item \label{cond:D<=} $\displaystyle
        \int_{B_{3r}(q)}
        \left(|\widetilde D\sigma_{\gamma^\varepsilon}^\varepsilon|^2
        +|\widetilde D^2\sigma_{\gamma^\varepsilon}^\varepsilon|^2\right)
        \leq\varepsilon^{1/3}
        \|\sigma^\varepsilon_{\gamma^\varepsilon}\|^2$;
    \item \label{cond:Kato<=}$\displaystyle
        \int_{B_{3r}(q)}
        \left(|\nabla\sigma_{\gamma^\varepsilon}^\varepsilon|^2
        -\frac{n}{n-1}|\nabla|\sigma_{\gamma^\varepsilon}^\varepsilon||^2
        +c_n|\widetilde D\sigma_{\gamma^\varepsilon}^\varepsilon|^2
        +c_n|\widetilde D\sigma_{\gamma^\varepsilon}^\varepsilon|
        |\nabla\sigma_{\gamma^\varepsilon}^\varepsilon|\right)
        \leq \varepsilon^{1/3}
        \|\sigma^\varepsilon_{\gamma^\varepsilon}\|^2$;
    \item \label{cond:Detla<=}$\displaystyle
        |Q(|\sigma^\varepsilon_{\gamma^\varepsilon}|,\varphi)|
        \leq \varepsilon^{1/3}
        \|\sigma^\varepsilon_{\gamma^\varepsilon}\|\cdot\sqrt{Q(\varphi,\varphi)}$
        for every $\varphi\in C_c^\infty(B_{3r}(q))$.
\end{enumerate}

\medskip

We prove the claim. Let
\begin{equation*}
    \Gamma^\varepsilon_1
    =\left\{\gamma\in\Gamma:
    \|\sigma^\varepsilon_\gamma\|^2
    \leq \frac{2}{C_1}\int_{B_{2r}(q)}
    |\sigma^\varepsilon_\gamma|^2\right\}.
\end{equation*}
Since the balls $B_{3r}(\gamma q)$ are pairwise disjoint, the normalization
$\|\sigma^\varepsilon\|=1$ implies
\begin{equation*}
    \sum_{\gamma\in\Gamma}\|\sigma^\varepsilon_\gamma\|^2
    \leq 1.
\end{equation*}
By the definition of $\Gamma^\varepsilon_1$,
\begin{equation*}
    \sum_{\gamma\notin\Gamma^\varepsilon_1}
    \int_{B_{2r}(q)}|\sigma^\varepsilon_\gamma|^2
    <\frac{C_1}{2}
    \sum_{\gamma\notin\Gamma^\varepsilon_1}
    \|\sigma^\varepsilon_\gamma\|^2
    \leq \frac{C_1}{2}\sum_{\gamma\in\Gamma}\|\sigma^\varepsilon_\gamma\|^2\leq\frac{C_1}{2}.
\end{equation*}
Combining this with the lower bound
\[
    \sum_{\gamma\in\Gamma}
    \int_{B_{2r}(q)}|\sigma^\varepsilon_\gamma|^2\geq C_1
\]
gives
\begin{equation*}
    \sum_{\gamma\in\Gamma^\varepsilon_1}
    \int_{B_{2r}(q)}|\sigma^\varepsilon_\gamma|^2
    \geq C_1-\frac{C_1}{2}=\frac{C_1}{2}.
\end{equation*}
In particular, $\Gamma^\varepsilon_1$ is nonempty. Since the last sum consists
of nonnegative terms, we may choose a finite
subset
$\Gamma_2^\varepsilon\subset\Gamma_1^\varepsilon$ such that
$\sigma^\varepsilon_\gamma\ne 0$ for all
$\gamma\in\Gamma_2^\varepsilon$ and
\begin{equation*}
    \sum_{\gamma\in\Gamma_2^\varepsilon}
    \int_{B_{2r}(q)}|\sigma^\varepsilon_\gamma|^2
    \geq \frac{C_1}{4}.
\end{equation*}
Every element of $\Gamma^\varepsilon_2$ satisfies \ref{cond:ne0} and
\ref{cond:3<=2}.

\medskip

It remains to prove that at least one element of $\Gamma^\varepsilon_2$ also
satisfies \ref{cond:D<=}--\ref{cond:Detla<=}. Suppose, to the contrary, that
no element of $\Gamma_2^\varepsilon$ satisfies all three estimates. Then, for
each $\gamma\in\Gamma_2^\varepsilon$, at least one of the following
alternatives holds:
\begin{enumerate}[label=(\arabic*)', start=3]
    \item $\displaystyle
        \int_{B_{3r}(q)}
        \left(|\widetilde D\sigma_{\gamma}^\varepsilon|^2
        +|\widetilde D^2\sigma_{\gamma}^\varepsilon|^2\right)
        > \varepsilon^{1/3}\|\sigma^\varepsilon_{\gamma}\|^2,$
    \item $\displaystyle
        \int_{B_{3r}(q)}
        \left(|\nabla\sigma_{\gamma}^\varepsilon|^2
        -\frac{n}{n-1}|\nabla|\sigma_{\gamma}^\varepsilon||^2
        +c_n|\widetilde D\sigma_{\gamma}^\varepsilon|^2
        +c_n|\widetilde D\sigma_{\gamma}^\varepsilon|
        |\nabla\sigma_{\gamma}^\varepsilon|\right)
        >\varepsilon^{1/3}\|\sigma^\varepsilon_{\gamma}\|^2$,
    \item $\displaystyle
        |Q(|\sigma^\varepsilon_\gamma|,\varphi_\gamma)|
        >\varepsilon^{1/3}
        \|\sigma^\varepsilon_{\gamma}\|\cdot \sqrt{Q(\varphi_\gamma,\varphi_\gamma)}$
        for some local test function
        $\varphi_\gamma\in C_c^\infty(B_{3r}(q))$.
\end{enumerate}

Let $\Gamma^\varepsilon_{2,3'}$, $\Gamma^\varepsilon_{2,4'}$, and
$\Gamma^\varepsilon_{2,5'}$ denote the subsets of $\Gamma_2^\varepsilon$
on which $(3)'$, $(4)'$, and $(5)'$ hold, respectively. These three subsets
cover $\Gamma_2^\varepsilon$, since every $\gamma\in\Gamma_2^\varepsilon$
satisfies at least one of the alternatives $(3)'$, $(4)'$, and $(5)'$.

For each element of $\Gamma^\varepsilon_{2,5'}$, choose one test function
$\varphi_\gamma$ satisfying $(5)'$, and normalize it and, if necessary, change
its sign so that
$Q(\varphi_\gamma,\varphi_\gamma)=1$ and
$Q(|\sigma^\varepsilon_\gamma|,\varphi_\gamma)>0$. Define the global test
function
\begin{equation*}
    \varphi
    =\sum_{\gamma\in\Gamma_{2,5'}^\varepsilon}
    \gamma(\varphi_\gamma)\cdot\|\sigma^\varepsilon_\gamma\|
    \in C_c^\infty(\widetilde M).
\end{equation*}
The supports of the summands are pairwise disjoint. Moreover, on
$B_{3r}(\gamma q)$ the function $|\sigma^\varepsilon|$ pulls back by
$\gamma^{-1}$ to $|\sigma^\varepsilon_\gamma|$, and the form $Q$ is local
and invariant under deck transformations. Thus there are no cross terms in
the $Q$-pairings. Since $Q(\varphi_\gamma,\varphi_\gamma)=1$, we have
\[
    Q(\varphi,\varphi)
    =\sum_{\gamma\in\Gamma_{2,5'}^\varepsilon}
    \|\sigma^\varepsilon_\gamma\|^2\leq 1 .
\]
Hence, using \eqref{eq:form-small}, we get
\begin{equation*}
    \begin{split}
        \sum_{\gamma\in\Gamma_{2,5'}^\varepsilon}
        \|\sigma^\varepsilon_\gamma\|
        Q(|\sigma^\varepsilon_\gamma|,\varphi_\gamma)
        &=Q(|\sigma^\varepsilon|,\varphi)\\
        &\leq \sqrt{C\varepsilon}\,\sqrt{Q(\varphi,\varphi)}    =\sqrt{C\varepsilon}
        \sqrt{\sum_{\gamma\in\Gamma_{2,5'}^\varepsilon}
        \|\sigma^\varepsilon_\gamma\|^2}
        \leq \sqrt{C\varepsilon}.
    \end{split}
\end{equation*}

Set
\begin{equation*}
    \begin{split}
        I\coloneqq&
        \sum_{\gamma\in\Gamma_2^\varepsilon}\int_{B_{3r}(q)}
        \left(|\widetilde D\sigma_{\gamma}^\varepsilon|^2
        +|\widetilde D^2\sigma_{\gamma}^\varepsilon|^2\right)
        +\sum_{\gamma\in\Gamma_{2,5'}^\varepsilon}
        \|\sigma_\gamma^\varepsilon\|
        Q(|\sigma_\gamma^\varepsilon|,\varphi_\gamma)\\
        &+\sum_{\gamma\in\Gamma^\varepsilon_2}\int_{B_{3r}(q)}
        \left(|\nabla\sigma_{\gamma}^\varepsilon|^2
        -\frac{n}{n-1}|\nabla|\sigma_{\gamma}^\varepsilon||^2
        +c_n|\widetilde D\sigma_{\gamma}^\varepsilon|^2
        +c_n|\widetilde D\sigma_{\gamma}^\varepsilon|
        |\nabla\sigma_{\gamma}^\varepsilon|\right).
    \end{split}
\end{equation*}
Each summand above is nonnegative.
Since
\[
    \Gamma_2^\varepsilon
    =\Gamma^\varepsilon_{2,3'}\cup
    \Gamma^\varepsilon_{2,4'}\cup
    \Gamma^\varepsilon_{2,5'},
\]
each $\gamma\in \Gamma_{2,i'}^\varepsilon$ contributes more than
$\varepsilon^{1/3}\|\sigma_\gamma^\varepsilon\|^2$ to the definition of
$I$ through the corresponding bad inequality $(i)'$. Hence
\begin{equation*}
    I>\varepsilon^{1/3}
    \sum_{\gamma\in\Gamma_2^\varepsilon}
    \|\sigma_\gamma^\varepsilon\|^2 \geq \varepsilon^{1/3}\cdot\frac{C_1}{4}.
\end{equation*}
On the other hand, Proposition~\ref{prop:Lichnerowicz}, together with the
estimate above for the $(5)'$-terms, gives
\begin{equation*}
    I\leq \varepsilon^2+\varepsilon^4+\sqrt{C\varepsilon}
    +C\varepsilon
    \leq (2+\sqrt{C}+C)\varepsilon^{1/2}.
\end{equation*}
This contradicts our choice of $\varepsilon$. The claim follows.

\medskip
Finally, choose a sequence $\varepsilon_j\downarrow 0$ satisfying the
smallness condition above, and let
$\gamma_j=\gamma^{\varepsilon_j}$ be given by the claim. We normalize the
corresponding localized spinors by setting
\begin{equation*}
    \overline{\sigma}^{\,j}
    =\frac{\sigma^{\varepsilon_j}_{\gamma_j}}
    {\|\sigma^{\varepsilon_j}_{\gamma_j}\|_{L^2(B_{2r}(q))}} .
\end{equation*}
Then
\begin{equation*}
    \int_{B_{2r}(q)}|\overline{\sigma}^{\,j}|^2=1.
\end{equation*}
Moreover, condition~\ref{cond:3<=2} gives the uniform local $L^2$ bound
\begin{equation*}
    \int_{B_{3r}(q)}|\overline{\sigma}^{\,j}|^2
    \leq \frac{2}{C_1}.
\end{equation*}
After this normalization, condition~\ref{cond:D<=} implies
\begin{equation*}
    \int_{B_{3r}(q)}
    \left(|\widetilde D\overline{\sigma}^{\,j}|^2
    +|\widetilde D^2\overline{\sigma}^{\,j}|^2\right)
    \leq \varepsilon_j^{1/3}.
\end{equation*}
Similarly, conditions~\ref{cond:Kato<=} and
\ref{cond:Detla<=} become
\begin{equation*}
    \int_{B_{3r}(q)}
    \left(|\nabla\overline{\sigma}^{\,j}|^2
    -\frac{n}{n-1}|\nabla|\overline{\sigma}^{\,j}||^2
    +c_n|\widetilde D\overline{\sigma}^{\,j}|^2
    +c_n|\widetilde D\overline{\sigma}^{\,j}|
    |\nabla\overline{\sigma}^{\,j}|\right)
    \leq \varepsilon_j^{1/3}
\end{equation*}
and, for every $\varphi\in C_c^\infty(B_{3r}(q))$,
\begin{equation*}
    |Q(|\overline{\sigma}^{\,j}|,\varphi)|
    \leq \varepsilon_j^{1/3}\sqrt{Q(\varphi,\varphi)}.
\end{equation*}

Local ellipticity for
$\widetilde D$, together with the preceding $L^2$-bound and the
$\widetilde D,\widetilde D^2$ bounds
on $B_{3r}(q)$, gives a uniform
$H^2(B_{2r}(q),S(T\widetilde M))$-bound for
$\overline{\sigma}^{\,j}$. Passing to a subsequence, we may assume that
\[
    \overline{\sigma}^{\,j}\to \sigma
    \quad\text{in } H^1(B_{2r}(q),S(T\widetilde M)).
\]
The convergence and normalization give
$$\int_{B_{2r}(q)}|\sigma|^2=1,$$
so $\sigma$ is nonzero. Passing to the limit using \ref{cond:D<=}, we obtain
$\widetilde D\sigma=0$. The equality case in the refined Kato inequality
follows by passing to the
limit in \ref{cond:Kato<=} and
Proposition~\ref{prop:refinedKato}. Namely,
\[
    |\nabla\sigma|^2=\frac{n}{n-1}|\nabla|\sigma||^2
    \quad\text{on } B_{2r}(q).
\]
Finally, for every $\varphi\in C_c^\infty(B_{2r}(q))$, the estimate for $Q$ gives
\begin{equation*}
    \int_{B_{2r}(q)}
    \left(\langle\nabla|\sigma|,\nabla\varphi\rangle
    -\frac{(n-1)^2}{4}|\sigma|\varphi\right)=0.
\end{equation*}
Thus $|\sigma|$ is a weak solution of
$-\Delta|\sigma|=\frac{(n-1)^2}{4}|\sigma|$ on $B_{2r}(q)$. Standard elliptic
regularity then implies that $\sigma$ is smooth, and the three asserted
properties follow.
\end{proof}

\subsection{Proof of Theorem~\ref{thm:rigidity}}
Having secured the existence of the strongly constrained local spinor, we now
translate the spinorial geometry into pure Riemannian geometry. For the
reader's convenience, we recall what remains to be proved: under the
hypotheses of Theorem~\ref{thm:rigidity}, the universal cover
$(\widetilde M,\widetilde g)$ must be isometric to hyperbolic space
$\mathbb H^n$. Since Theorem~\ref{thm: cocompact_rigidity} has already given
$\Sc_g\equiv -n(n-1)$, it is enough to prove that $g$ is Einstein,
\[
    \Ric_g=-(n-1)g.
\]
Indeed, the desired hyperbolic rigidity then follows from X. Wang's sharp
eigenvalue rigidity theorem (see \cite[Theorem~1.4]{Wang_eigenvalue}).
\begin{proof}[Proof of Theorem~\ref{thm:rigidity}]
Let us first assume that $n\geq 3$ since the scalar curvature is twice of the Gaussian curvature as $n=2$. 

Fix any point $p\in M$, and choose a lift $q\in\widetilde M$.
It suffices to prove the curvature identity purely at the point $q$.
Let $\sigma$ be the nontrivial limiting spinor on $B_{2r}(q)$ obtained in
Lemma~\ref{lemma:limit}. Since $|\sigma|\geq 0$ weakly solves the Laplacian
equation~\ref{Delta=}, the Harnack inequality implies that $|\sigma|>0$ in the interior ball $B_r(q)$.

Set $V=\nabla(\log|\sigma|)$. By Lemma~\ref{lemma:limit}, $\sigma$ realizes
equality in the refined Kato inequality. The equality case gives the
first-order nonlinear equation
\begin{equation}\label{eq:Kato}
	\nabla\sigma=\frac{1}{n-1}\sum_{i=1}^n e_i\otimes c(e_i)c(V)\sigma+\frac{n}{n-1}V\otimes \sigma,
\end{equation}
where $\{e_i\}$ is a local orthonormal frame near $q$ with $\nabla e_i(q)=0$.

Let $X$ be an arbitrary vector field near $q$ that  satisfies
$\nabla X(q)=0$.
By the spinorial curvature identity
(see \cite[Corollary 2.8]{spinorialapproach}), we have
\begin{equation*}
    [D,\nabla_X]=\sum_{i=1}^n c(e_i)[\nabla_{e_i},\nabla_X]=\frac{1}{2}c(\Ric_g(X)).
\end{equation*}
Since $D\sigma=0$ by Lemma~\ref{lemma:limit}, we obtain
\begin{equation*}
\begin{split}
		\frac{1}{2} c(\Ric_g(X))\sigma &= D\left(\frac{1}{n-1} c(X)c(V)\sigma+\frac{n}{n-1}\langle V,X\rangle\sigma\right)\\
		&= \frac{1}{n-1}\underbrace{\sum_{i=1}^n c(e_i)c(X)c(\nabla_{e_i} V)\sigma}_{I_1}+\frac{1}{n-1}\underbrace{\sum_{i=1}^n c(e_i)c(X)c(V)\nabla_{e_i}\sigma}_{I_2}\\
		&\quad+\frac{n}{n-1}\underbrace{\sum_{i=1}^n e_i(\langle V,X\rangle)c(e_i)\sigma}_{I_3}.
\end{split}
\end{equation*}

We now simplify the three terms $I_1$, $I_2$, and $I_3$ one by one below.

\begin{enumerate}
\item For \(I_1\), since \(V\) is a gradient field, the Hessian \(\nabla V\)
is symmetric. Using the Clifford relation
\[
c(e_i)c(X)+c(X)c(e_i)=-2\langle e_i,X\rangle,
\]
we obtain
\[
\begin{aligned}
I_1
&=\sum_{i=1}^n c(e_i)c(X)c(\nabla_{e_i}V)\sigma  \\
&=-\sum_{i=1}^n c(X)c(e_i)c(\nabla_{e_i}V)\sigma
  -2\sum_{i=1}^n \langle X,e_i\rangle c(\nabla_{e_i}V)\sigma  \\
&=-c(X)\sum_{i=1}^n c(e_i)c(\nabla_{e_i}V)\sigma
  -2c(\nabla_XV)\sigma .
\end{aligned}
\]
To simplify the first term, write
\[
\nabla_{e_i}V=\sum_{j=1}^n h_{ij}e_j,
\qquad h_{ij}=h_{ji}.
\]
Then
\[
\begin{aligned}
\sum_{i=1}^n c(e_i)c(\nabla_{e_i}V)
&=\sum_{i,j=1}^n h_{ij}c(e_i)c(e_j)  \\
&=\sum_{i=1}^n h_{ii}c(e_i)^2
  +\sum_{i<j}h_{ij}\bigl(c(e_i)c(e_j)+c(e_j)c(e_i)\bigr) \\
&=-\sum_{i=1}^n h_{ii}
=-\divv(V).
\end{aligned}
\]
Therefore
\[
I_1
=
\divv(V)c(X)\sigma-2c(\nabla_XV)\sigma .
\]

\item For \(I_2\), the Clifford relation gives
\[
\begin{aligned}
c(e_i)c(X)c(V)
&=-c(X)c(e_i)c(V)-2\langle X,e_i\rangle c(V)  \\
&=c(X)c(V)c(e_i)
  +2\langle e_i,V\rangle c(X)
  -2\langle X,e_i\rangle c(V).
\end{aligned}
\]
Substituting this into \(I_2\), we get
\[
\begin{aligned}
I_2
&=\sum_{i=1}^n c(e_i)c(X)c(V)\nabla_{e_i}\sigma  \\
&=\sum_{i=1}^n c(X)c(V)c(e_i)\nabla_{e_i}\sigma  \\
&\quad
  +\sum_{i=1}^n
  \left(
  2\langle e_i,V\rangle c(X)
  -2\langle X,e_i\rangle c(V)
  \right)\nabla_{e_i}\sigma .
\end{aligned}
\]
The first sum vanishes because
\[
\sum_{i=1}^n c(e_i)\nabla_{e_i}\sigma=D\sigma=0.
\]
Hence
\[
\begin{aligned}
I_2
&=
2c(X)\sum_{i=1}^n \langle e_i,V\rangle \nabla_{e_i}\sigma
-2c(V)\sum_{i=1}^n \langle X,e_i\rangle \nabla_{e_i}\sigma  \\
&=2c(X)\nabla_V\sigma-2c(V)\nabla_X\sigma .
\end{aligned}
\]
We now use \eqref{eq:Kato}. First,
\[
\nabla_V\sigma
=
\frac{1}{n-1}c(V)c(V)\sigma
+\frac{n}{n-1}|V|^2\sigma
=
|V|^2\sigma,
\]
because \(c(V)^2=-|V|^2\). Also,
\[
\nabla_X\sigma
=
\frac{1}{n-1}c(X)c(V)\sigma
+\frac{n}{n-1}\langle V,X\rangle\sigma.
\]
Therefore
\[
\begin{aligned}
I_2
&=2|V|^2c(X)\sigma
  -2c(V)\left(
  \frac{1}{n-1}c(X)c(V)\sigma
  +\frac{n}{n-1}\langle V,X\rangle\sigma
  \right).
\end{aligned}
\]
Using
\[
c(V)c(X)c(V)=|V|^2c(X)-2\langle V,X\rangle c(V),
\]
we obtain
\[
\begin{aligned}
I_2
&=2|V|^2c(X)\sigma
  -\frac{2}{n-1}|V|^2c(X)\sigma
  +\frac{4}{n-1}\langle V,X\rangle c(V)\sigma  \\
&\quad
  -\frac{2n}{n-1}\langle V,X\rangle c(V)\sigma  \\
&=\frac{2(n-2)}{n-1}|V|^2c(X)\sigma
  -\frac{2(n-2)}{n-1}\langle V,X\rangle c(V)\sigma .
\end{aligned}
\]

\item For $I_3$, the symmetry of $\nabla V$ gives
\begin{equation*}
\begin{split}
    I_3
    = \sum_{i=1}^n\langle\nabla_{e_i} V,X\rangle c(e_i)\sigma
    = \sum_{i=1}^n\langle\nabla_X V,e_i\rangle c(e_i)\sigma
     = c(\nabla_X V)\sigma .
\end{split}
\end{equation*}

\end{enumerate}

Combining the three identities, we obtain
\[
\begin{aligned}
\frac{1}{2}c(\Ric_g(X))\sigma
&=
\frac{1}{n-1}I_1+\frac{1}{n-1}I_2+\frac{n}{n-1}I_3  \\
&=
\frac{1}{n-1}
\left(\divv(V)c(X)\sigma-2c(\nabla_XV)\sigma\right)  \\
&\quad
+\frac{2(n-2)}{(n-1)^2}
\left(
|V|^2c(X)\sigma-\langle V,X\rangle c(V)\sigma
\right)
+\frac{n}{n-1}c(\nabla_XV)\sigma  \\
&=
\frac{1}{n-1}\divv(V)c(X)\sigma
+\frac{n-2}{n-1}c(\nabla_XV)\sigma  \\
&\quad
+\frac{2(n-2)}{(n-1)^2}
\left(
|V|^2c(X)\sigma-\langle V,X\rangle c(V)\sigma
\right).
\end{aligned}
\]
Equivalently,
\[
c(Y)\sigma=0,
\]
where
\[
Y
=
-\frac{1}{2}\Ric_g(X)
+\frac{n-2}{n-1}\nabla_XV
+\frac{1}{n-1}\divv(V)X
+\frac{2(n-2)}{(n-1)^2}
\left(
|V|^2X-\langle V,X\rangle V
\right).
\]

Taking the norm of \(c(Y)\sigma=0\), and using
\[
|c(Y)\sigma|=|Y|\,|\sigma|,
\]
we conclude that \(|Y|\,|\sigma|=0\). Since \(\sigma\neq 0\) everywhere on
\(B_r(q)\), it follows that \(Y=0\). Thus
\begin{equation}\label{eq:V-eq1}
\frac{1}{2}\Ric_g(X)
=
\frac{n-2}{n-1}\nabla_XV
+\frac{1}{n-1}\divv(V)X
+\frac{2(n-2)}{(n-1)^2}
\left(|V|^2X-\langle V,X\rangle V\right).
\end{equation}

Taking the trace in \(X\) gives
\[
\begin{aligned}
\frac{1}{2}\Sc_g
&=
\frac{n-2}{n-1}\divv(V)
+\frac{n}{n-1}\divv(V)
+\frac{2(n-2)}{(n-1)^2}
\left(n|V|^2-|V|^2\right)  \\
&=
2\divv(V)
+\frac{2(n-2)}{n-1}|V|^2 .
\end{aligned}
\]
Since \(\Sc_g=-n(n-1)\), this yields
\[
\divv(V)
=
-\frac{n(n-1)}{4}
-\frac{n-2}{n-1}|V|^2 .
\]

On the other hand, because \(V=\nabla(\log|\sigma|)\), we have
\[
\begin{aligned}
\divv(V)
&=\Delta(\log|\sigma|)  \\
&=\frac{\Delta|\sigma|}{|\sigma|}
-\frac{|\nabla|\sigma||^2}{|\sigma|^2}  \\
&=\frac{\Delta|\sigma|}{|\sigma|}-|V|^2 .
\end{aligned}
\]
Using \eqref{Delta=} from Lemma~\ref{lemma:limit}, namely
\[
-\Delta|\sigma|=\frac{(n-1)^2}{4}|\sigma|,
\]
we obtain
\[
\divv(V)
=
-\frac{(n-1)^2}{4}
-|V|^2 .
\]
Comparing the two formulas for \(\divv(V)\), we get
\[
|V|^2=\frac{(n-1)^2}{4},
\qquad
\divv(V)=-\frac{(n-1)^2}{2}.
\]

Substituting these identities back into \eqref{eq:V-eq1}, we obtain
\[
\begin{aligned}
\frac{1}{2}\Ric_g(X)
&=
\frac{n-2}{n-1}\nabla_XV
+\frac{1}{n-1}\left(-\frac{(n-1)^2}{2}\right)X  \\
&\quad
+\frac{2(n-2)}{(n-1)^2}
\left(
\frac{(n-1)^2}{4}X-\langle V,X\rangle V
\right)  \\
&=
\frac{n-2}{n-1}\nabla_XV
-\frac{n-1}{2}X
+\frac{n-2}{2}X
-\frac{2(n-2)}{(n-1)^2}\langle V,X\rangle V  \\
&=
\frac{n-2}{n-1}\nabla_XV
-\frac{1}{2}X
-\frac{2(n-2)}{(n-1)^2}\langle V,X\rangle V .
\end{aligned}
\]
Thus
\begin{equation}\label{eq:V-eq2}
\frac{1}{2}\Ric_g(X)
=
\frac{n-2}{n-1}\nabla_XV
-\frac{1}{2}X
-\frac{2(n-2)}{(n-1)^2}\langle V,X\rangle V .
\end{equation}

\medskip
Define the traceless Ricci tensor by
\[
E(X)=\Ric_g(X)+(n-1)X.
\]
Then \eqref{eq:V-eq2} can be rewritten as
\begin{equation}\label{eq:E-eq}
\nabla_XV
=
\frac{n-1}{2(n-2)}E(X)
-\frac{n-1}{2}X
+\frac{2}{n-1}\langle V,X\rangle V .
\end{equation}

Since \(|V|^2\) is constant, we have
\[
\langle \nabla_XV,V\rangle=0.
\]
Taking the inner product of \eqref{eq:E-eq} with \(V\), and using
\[
|V|^2=\frac{(n-1)^2}{4},
\]
gives
\[
\begin{aligned}
0
&=
\frac{n-1}{2(n-2)}\langle E(X),V\rangle
-\frac{n-1}{2}\langle X,V\rangle
+\frac{2}{n-1}\langle V,X\rangle |V|^2  \\
&=
\frac{n-1}{2(n-2)}\langle E(X),V\rangle .
\end{aligned}
\]
Thus \(E(V)=0\), equivalently
\[
\Ric_g(V)=-(n-1)V.
\]

We next apply the Bochner formula to the function \(\log|\sigma|\), whose
gradient is \(V\):
\[
\frac{1}{2}\Delta |V|^2
=
|\nabla V|^2+\langle \Ric_g(V),V\rangle
+\langle V,\nabla(\divv V)\rangle .
\]
Since both \(|V|^2\) and \(\divv(V)\) are constant, this becomes
\begin{equation}\label{eq:|nablaV|^2}
|\nabla V|^2
=
(n-1)|V|^2
=
\frac{(n-1)^3}{4}.
\end{equation}

We now compute the same quantity from \eqref{eq:E-eq}. First, taking \(X=V\)
and using \(E(V)=0\), we get
\[
\nabla_VV
=
-\frac{n-1}{2}V
+\frac{2}{n-1}|V|^2V
=0.
\]
For any \(X\perp V\), equation \eqref{eq:E-eq} reduces to
\[
\nabla_XV
=
\frac{n-1}{2(n-2)}E(X)
-\frac{n-1}{2}X.
\]
Choose an orthonormal frame with \(e_1=V/|V|\) and
\(e_i\perp V\) for \(i=2,\ldots,n\). Since \(E(V)=0\), we have
\[
|E|^2=\sum_{\alpha=2}^n |E(e_i)|^2.
\]
Therefore
\[
\begin{aligned}
|\nabla V|^2
&=
\sum_{i=2}^n
\left|
\frac{n-1}{2(n-2)}E(e_i)
-\frac{n-1}{2}e_i
\right|^2  \\
&=
\frac{(n-1)^2}{4(n-2)^2}|E|^2
+\frac{(n-1)^3}{4}
-\frac{(n-1)^2}{2(n-2)}
\sum_{i=2}^n \langle E(e_i),e_i\rangle .
\end{aligned}
\]
Since \(E\) is traceless and \(E(V)=0\), the last sum is zero. Hence
\[
|\nabla V|^2
=
\frac{(n-1)^2}{4(n-2)^2}|E|^2
+\frac{(n-1)^3}{4}.
\]
Comparing this with \eqref{eq:|nablaV|^2}, we obtain
\[
|E|^2=0.
\]
Thus \(E\) vanishes at the point under consideration. Since the point \(p\) was
arbitrary, \(E\equiv 0\) on \(M\), and therefore
\[
\Ric_g=-(n-1)g.
\]
We now apply X. Wang's rigidity theorem in the Ricci curvature setting. Since
the universal cover satisfies
\[
\Ric_{\widetilde g}=-(n-1)\widetilde g
\qquad\text{and}\qquad
\lambda_1(\widetilde M,\widetilde g)=\frac{(n-1)^2}{4},
\]
X. Wang's characterization theorem implies that
\[
(\widetilde M,\widetilde g)\cong \mathbb H^n
\]
isometrically (see \cite[Theorem~1.4]{Wang_eigenvalue}). Therefore \((M,g)\)
is a closed hyperbolic manifold. This proves Theorem~\ref{thm:rigidity}.
\end{proof}
\bibliographystyle{amsplain}
\bibliography{ref}

\end{document}